\theoremstyle{plain}
\newtheorem{theorem}{Theorem}
\numberwithin{theorem}{section}
\newtheorem{lemma}[theorem]{Lemma}
\newtheorem{corollary}[theorem]{Corollary}
\theoremstyle{definition}
\newcommand{\C}{\mathbb{C}}
\newcommand{\F}{\mathbb{F}}
\newcommand{\M}{\mathsf{M}}
\newcommand{\R}{\mathbb{R}}
\newcommand{\T}{\mathsf{T}}
\newcommand{\colspace}{\operatorname{col}}
\newcommand{\nullspace}{\operatorname{null}}
\newcommand{\rank}{\operatorname{rank}}
\renewcommand{\vec}[1]{\mathbf{#1}}
\newcommand{\norm}[1]{\| #1 \|}
\newcommand{\tworowvector}[2]{[#1\,\,#2]}
\begin{document}
\renewcommand{\theenumi}{\alph{enumi}} 
\setenumerate{itemsep=3pt}

\title{The linear targeting problem}

\author[K.~Bierly]{Kyle Bierly}
\address{Department of Mathematics and Statistics, Pomona College, 610 N. College Ave., Claremont, CA 91711, USA}
\email{kebierly@gmail.com}

\author[S.R.~Garcia]{Stephan Ramon Garcia}
\address{Department of Mathematics and Statistics, Pomona College, 610 N. College Ave., Claremont, CA 91711, USA}
\email{stephan.garcia@pomona.edu}
\urladdr{\url{https://stephangarcia.sites.pomona.edu/}}

\author[R.A.~Horn]{Roger A. Horn}
\address{Tampa, Florida, USA}
\email{rhorn@math.utah.edu}

\thanks{SRG partially supported by NSF grant DMS-2054002.}

\begin{abstract}
For given real or complex $m \times n$ data matrices $X$, $Y$, we investigate when there is a matrix $A$ such that $AX = Y$, and $A$ is invertible, Hermitian, positive (semi)definite, unitary, an orthogonal projection, a reflection, complex symmetric, or normal.
\end{abstract}

\keywords{data mapping, source and target matrices, constrained systems of linear equations}
\subjclass[2020]{15A06, 15A10, 15A24}

\maketitle

\section{Introduction}

The \emph{linear targeting problem} is to construct an $A\in \M_{m}(\F)$ such that $AX=Y$, in which $X,Y\in \M_{m\times n}(\F)$ are given
and $\F=\R$ or $\C$. We say that $X$ is the \emph{source}, $Y$ is the \emph{target}, and $A$ is a \emph{targeting matrix}. 
If there is an $A\in \M_{m}(\F)$ with property $\mathcal{P}$ such that $AX=Y$, we say that $A$ solves the 
\emph{$\mathcal{P}$-targeting problem for $X$ and $Y$}. In this paper, we consider the following
properties: invertible, Hermitian, positive (semi)definite, unitary,
orthogonal projection, reflection, complex symmetric, and normal.

If $AX=Y$, then $\nullspace  X\subseteq \nullspace Y$, so this condition (or something that implies it) is found in all of our results. The invertible
linear targeting problem is just the row equivalence problem, which has a solution if and only if $\nullspace X=\nullspace Y$ \cite[Theorem 4.4.1]{GH2}.

If $m<n$ and $AX=Y$, then after a simultaneous permutation of the columns of 
$X$ and $Y$ we may assume that $X=\tworowvector{X_1}{X_2}$, $Y = \tworowvector{Y_1}{Y_2}$, $X_{1},Y_{1}\in \M_{m\times k}(\F)$, $k\leq m$, 
and $\colspace X_{2}\subseteq \colspace X_{1}$. For example, we could choose $k=\rank X$, but any $k$ between $\rank X$ and $m$ will do. Since there is
some $B\in \M_{k\times (n-k)}(\F)$ such that $X_{2}=X_{1}B$ \cite[Theorem 1.6.21.a]{GH2}, we have $AX_{1}=Y_{1}$, from which it follows that 
$AX_{2}=A( X_{1}B) =Y_{1}B=Y_{2}$. Thus, in considering the linear targeting problem $AX=Y$ with $X,Y\in \M_{m\times n}(\F)$, it
suffices to consider only the case $m\geq n\geq 1$.

For any $Z\in \M_{m\times n}(\F)$ ($m\times n$ matrices over the field $\R$ or $\C$), $Z^*$ is the \emph{conjugate transpose} of $Z$
and $Z^{\dagger }$ is its \emph{Moore--Penrose pseudoinverse}; see \cite[\S\ 17.4]{GH2} or \cite[7.3.P7]{MA2}. The \emph{Euclidean norm} of $\vec{x}
\in \F^{n}$ is denoted by $\norm{ \vec{x}} =( \vec{x}^*\vec{x})^{1/2}$. If $A\in \M_{n}(\F)$, we
write $A\geq 0$ (respectively, $A>0$) if $A\in \M_{n}(\F)$ is \emph{positive semidefinite} (respectively, \emph{positive definite}), that is, $A$
is Hermitian and $\vec{u}^*A\vec{u}\geq 0$ (respectively, $\vec{u}^*A\vec{u}>0)$ for all nonzero $\vec{u}\in \F^{n}$. 
If $B,S\in \M_{m}(\F)$ and $S$ is invertible, then $S^*BS$ is $^*$\emph{congruent} to $B$. If $B$ is, respectively, invertible,
Hermitian, or positive semidefinite, then $S^*BS$ has the same respective properties.

\section{Unconstrained linear targeting}

A linear targeting problem need not have a solution. For example, $AX=Y$ is
impossible if
\begin{equation*}
X= 
\begin{bmatrix}
1 & 0 \\ 
0 & 0
\end{bmatrix}
 \quad \text{and}\quad Y= 
\begin{bmatrix}
0 & 1 \\ 
0 & 0
\end{bmatrix}.
\end{equation*}
Since $\nullspace X\subseteq \nullspace ( AX) $, it is necessary that $\nullspace X\subseteq \nullspace Y$. A computation with pseudoinverses
shows that this condition permits us to identify all solutions of an unconstrained linear targeting problem.

\begin{theorem}
Let $X,Y\in \M_{m\times n}(\F)$ with $m\geq n\geq 1$ and $\nullspace  X\subseteq \nullspace Y$. 
\begin{enumerate}[leftmargin=*]
    \item $( YX^{\dagger }) X=Y$. 
    \item If $Z\in \M_{m}(\F)$ and $A=YX^{\dagger }+Z(I-XX^{\dagger })$,
    then $AX=Y$. 
    \item If $A\in \M_{m}(\F)$ and $AX=Y$, then there is a $Z\in \M_{m}(\F)$ such that $A=YX^{\dagger }+Z(I-XX^{\dagger })$.
\end{enumerate}
\end{theorem}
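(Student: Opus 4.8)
The plan is to establish the three parts in order, relying on standard properties of the Moore--Penrose pseudoinverse, chiefly the identities $X X^{\dagger} X = X$, $X^{\dagger} X X^{\dagger} = X^{\dagger}$, that $X X^{\dagger}$ and $X^{\dagger} X$ are Hermitian, and that $X^{\dagger} X$ is the orthogonal projection onto $\colspace X^* = (\nullspace X)^{\perp}$ while $X X^{\dagger}$ is the orthogonal projection onto $\colspace X$.

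For part (a), the key observation is that the hypothesis $\nullspace X \subseteq \nullspace Y$ forces $Y(I - X^{\dagger}X) = 0$. Indeed, $I - X^{\dagger}X$ is the orthogonal projection onto $\nullspace X$, so each column of $I - X^{\dagger}X$ lies in $\nullspace X \subseteq \nullspace Y$; hence $Y(I - X^{\dagger}X) = 0$, i.e. $Y X^{\dagger} X = Y$. Then $(Y X^{\dagger})X = Y(X^{\dagger}X) = Y$, which is (a). Part (b) is then immediate: if $A = Y X^{\dagger} + Z(I - XX^{\dagger})$, then $AX = Y X^{\dagger} X + Z(I - XX^{\dagger})X = Y + Z(X - XX^{\dagger}X) = Y + Z(X - X) = Y$, using part (a) and $XX^{\dagger}X = X$.

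For part (c), suppose $A \in \M_m(\F)$ satisfies $AX = Y$. The natural candidate is $Z = A$ itself; I would check that $A = Y X^{\dagger} + A(I - XX^{\dagger})$. Expanding the right-hand side gives $Y X^{\dagger} + A - A X X^{\dagger} = (Y - AX)X^{\dagger} + A = (Y - Y)X^{\dagger} + A = A$, where I used $AX = Y$. So $Z = A$ works, completing (c).

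I do not anticipate a serious obstacle here; the only point requiring care is the derivation $\nullspace X \subseteq \nullspace Y \implies Y(I - X^{\dagger}X) = 0$ in part (a), which is the lone place the hypothesis is used and is the conceptual heart of the argument. Everything else is a routine manipulation of the defining pseudoinverse identities. One could alternatively phrase part (a) via the known fact that $YX^{\dagger}$ is the minimum-norm least-squares solution operator, but the direct column-space argument above is cleaner and self-contained.
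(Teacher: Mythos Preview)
Your proof is correct and follows essentially the same approach as the paper: the same projection identity $YX^{\dagger}X=Y$ drives part (a), part (b) is identical, and part (c) uses the same algebraic rearrangement. The only cosmetic difference is that in (c) you take $Z=A$, whereas the paper takes $Z=A-YX^{\dagger}$; both choices work, and yours is arguably the cleaner one.
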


\begin{proof}
(a) The hypothesis $\nullspace X\subseteq \nullspace Y$ is equivalent to the
condition $\colspace Y^*\subseteq \colspace X^*$. Since $X^{\dagger }X$ is the orthogonal projection (necessarily Hermitian) onto 
$\colspace X^*$, we have $X^{\dagger }XY^*=Y^*$, and hence $Y=YX^{\dagger }X=( YX^{\dagger }) X$. 

\medskip\noindent(b) Compute
\begin{align*}
    AX 
    &= ( YX^{\dagger }+Z(I-XX^{\dagger })) X \\
    &= YX^{\dagger }X+Z(X-XX^{\dagger }X) \\
    &= Y+Z(X-X)=Y.
\end{align*}

\medskip\noindent(c) If $AX=Y$, then
\begin{align*}
    A-YX^{\dagger } 
    &= ( A-YX^{\dagger }) ( I-XX^{\dagger}+XX^{\dagger }) \\
    &= ( A-YX^{\dagger }) ( I-XX^{\dagger }) +(A-YX^{\dagger }) XX^{\dagger } \\
    &= ( A-YX^{\dagger }) ( I-XX^{\dagger }) +(AX-( YX^{\dagger }) X) X^{\dagger } \\
    &= ( A-YX^{\dagger }) ( I-XX^{\dagger }) +(Y-Y) X^{\dagger } \\
    &= ( A-YX^{\dagger }) ( I-XX^{\dagger }) =Z(I-XX^{\dagger }) ,
\end{align*}
in which $Z=A-YX^{\dagger }$.
\end{proof}

The following theorem re-examines the unconstrained linear targeting problem
in the context of a singular value decomposition of the data matrix $X$. 
Later, we find it convenient to employ a singular value decomposition of $Y$, with analogous notation.
If $X=0$, the linear targeting problem has a solution if and only if $Y=0$, so this case is not interesting.

\begin{theorem}\label{Theorem:Basic}
Let $X,Y\in \M_{m\times n}(\F)$ with $m\geq n\geq 1$ and $X\neq 0$. 
There is an $A\in \M_{m}(\F)$ such that $AX=Y$ if and only if $\nullspace X\subseteq \nullspace Y$.
\end{theorem}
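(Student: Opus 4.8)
The plan is to dispatch the ``only if'' direction with the null-space inclusion that has already appeared twice in the text, and to prove the ``if'' direction by bringing $X$ into a singular value decomposition and reading off a block-form solution; the normal form produced here is the one on which later sections build.

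For necessity, suppose $A \in \M_m(\F)$ satisfies $AX = Y$. If $X\vec{u} = \vec{0}$ then $Y\vec{u} = A X \vec{u} = \vec{0}$, so $\nullspace X \subseteq \nullspace Y$; this is the necessary condition recorded just before the previous theorem.

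For sufficiency, assume $\nullspace X \subseteq \nullspace Y$, let $r = \rank X$, and note $r \geq 1$ since $X \neq 0$. Write a singular value decomposition
\[
X = U\begin{bmatrix} \Sigma_r & 0 \\ 0 & 0 \end{bmatrix} V^*,
\]
with $U \in \M_m(\F)$ and $V \in \M_n(\F)$ unitary and $\Sigma_r = \operatorname{diag}(\sigma_1,\ldots,\sigma_r) > 0$. The last $n - r$ columns of $V$ form a basis for $\nullspace X$, so the hypothesis $\nullspace X \subseteq \nullspace Y$ is equivalent to the statement that $YV = \tworowvector{W}{0}$ for some $W \in \M_{m\times r}(\F)$, where the zero block has width $n - r$. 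I would then take
\[
A := \begin{bmatrix} W\Sigma_r^{-1} & 0 \end{bmatrix} U^*,
\]
the trailing zero block having width $m - r$ (any $m \times (m-r)$ block works here; it is the free part of the solution). A short computation using $U^*U = I_m$ then gives $AX = \tworowvector{W}{0}V^* = (YV)V^* = Y$.

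The only step requiring a moment's thought is the equivalence that $\nullspace X \subseteq \nullspace Y$ holds if and only if the last $n - r$ columns of $YV$ vanish, and this is immediate once one recalls that those columns of $V$ span $\nullspace X$; everything else is forced, so I anticipate no real obstacle. The SVD is not logically necessary --- by part (a) of the previous theorem the single matrix $A = YX^{\dagger}$ already solves the problem --- but the block normal form exhibited above, together with the analogous decomposition of $Y$ alluded to just before this theorem, is the workhorse for the constrained targeting problems treated later.
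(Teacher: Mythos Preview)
Your proof is correct and follows essentially the same singular-value-decomposition approach as the paper: the hypothesis $\nullspace X \subseteq \nullspace Y$ kills the last $n-r$ columns of $YV$, and the surviving block is inverted against $\Sigma_r$, with an arbitrary $m\times(m-r)$ block supplying the freedom. One small remark: the paper packages the solution as $A = UBU^*$ (in your notation) with $B = \tworowvector{U^*W\Sigma_r^{-1}}{B_2}$, so that $A$ is unitarily similar to $B$; it is this $B$, rather than your $A$ as written, that is the actual workhorse in the later constrained problems, since properties invariant under unitary similarity pass between $A$ and $B$.
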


\begin{proof}
($\Rightarrow $) If $\vec{u}\in \nullspace X$, then $Y\vec{u}=AX \vec{u}=A\vec{0}=\vec{0}$.

\medskip\noindent($\Leftarrow $) Let $\rank X=r\geq 1$ and let $X=V\Sigma W^*$ be
a singular value decomposition, in which $V\in \M_{m}(\F)$ and $W\in\M_{n}(\F)$ are unitary,
\begin{equation}\label{eq:Sigma}
    \Sigma =
    \begin{bmatrix}
    \Sigma_{r} & 0_{r\times (n-r)} \\ 
    0_{(m-r)\times r} & 0_{(m-r)\times (n-r)}
    \end{bmatrix}
     \in \M_{m\times n}(\R),  
\end{equation}
$r\geq 1$, and $\Sigma_{r}\in \M_{r}(\R)$ is diagonal, invertible,
and positive definite. Partition $W=[W_{1}~W_{2}]$ and $V=[V_{1}~V_{2}]$, in
which $W_{1}\in \M_{n\times r}(\F)$ and $V_{1}\in \M_{m\times r}(\F)$. 
If $X$ has full rank, the block column of zeros in \eqref{eq:Sigma} is absent, 
$W=W_{1}$, and $W_{2}$ is absent. If $r<n$, the columns of 
$W_{2}$ are an orthonormal basis for $\nullspace X$ and the columns of $W_{1}$
are an orthonormal basis for $(\nullspace X)^{\perp }=\colspace X^*$.
Since $\nullspace X\subseteq \nullspace Y$, we have $YW_{2}=0$.

If there is a $B\in \M_{m}$ such that 
\begin{equation}\label{eq:BasicSigma}
    B\Sigma =V^*YW,  
\end{equation}
then $YW=VB\Sigma =(VBV^*)( V\Sigma W^*) W=(VBV^*) XW$ and hence 
\begin{equation*}
    ( VBV^*) X=Y.
\end{equation*}
Thus, $A=VBV^*$ solves the unconstrained targeting problem for $X$ and  $Y$. 
We can partition the right side of \eqref{eq:BasicSigma} in several ways:
\begin{align}
    V^*YW 
    &= V^*Y[W_{1}~W_{2}]=V^*[YW_{1}~YW_{2}]=[V^*YW_{1}~0]  \label{eq:V*YW} \\
    &= 
    \begin{bmatrix}
    V_{1}^*YW_{1} & 0 \\ 
    V_{2}^*YW_{1} & 0
    \end{bmatrix}
     = 
    \begin{bmatrix}
    Z_{1} & 0 \\ 
    Z_{2} & 0
    \end{bmatrix}
     = 
    \begin{bmatrix}
    Z & 0
    \end{bmatrix}
     \in \M_{m\times n}(\F),  \notag
\end{align}
in which the three matrices $Z=V^*YW_{1}\in \M_{m\times r}(\F)$,
$Z_{1}=V_{1}^*YW_{1}\in \M_{r}(\F)$, and $Z_{2}=V_{2}^{*}YW_{1}\in \M_{(m-r)\times r}(\F)$ are determined by the data $X$ and 
$Y$. Define $B_{1}=Z\Sigma_{r}^{-1}\in \M_{m\times r}(\F)$ and let 
\begin{equation}\label{eq:BasicB}
    B= 
    \tworowvector{B_1}{B_2}
     = 
    \begin{bmatrix}
    Z\Sigma_{r}^{-1} & B_{2}
    \end{bmatrix}
     = 
    \begin{bmatrix}
    Z_{1}\Sigma_{r}^{-1} & R \\ 
    Z_{2}\Sigma_{r}^{-1} & S
    \end{bmatrix},
\end{equation}
in which $R\in \M_{r\times (n-r)}(\F)$ and $S\in \M_{n-r}(\F)$ are arbitrary. Then 
\begin{equation*}
    B\Sigma = 
    \begin{bmatrix}
    Z\Sigma_{r}^{-1} & B_{2}
    \end{bmatrix}
    \begin{bmatrix}
    \Sigma_{r} & 0 \\ 
    0 & 0
    \end{bmatrix}
     = 
    \begin{bmatrix}
    Z & 0
    \end{bmatrix}
     =V^*YW,
\end{equation*}
so \eqref{eq:BasicSigma} is satisfied for any choice of $B_{2}$.
\end{proof}

If a property $\mathcal{P}$ is invariant under unitary similarity, the preceding theorem shows that the $\mathcal{P}$-targeting problem is
equivalent to a matrix completion problem. The source and target matrices determine $B_{1}$; if we can choose 
$B_{2}$ so that $B$ (and hence also $VBV^*)$ has property $\mathcal{P}$, then the $\mathcal{P}$-targeting
problem has a solution. For example, if $\mathcal{P}$ is the property that $A$ is invertible, then $B$ must be invertible and hence the data must
ensure that $B_{1}$ has full rank. The following corollary reveals a familiar criterion for row equivalence of two matrices; see \cite[Theorem 4.4.1 and P.8.30]{GH2}.

\begin{corollary}\label{Corollary:Invertible}
Let $X,Y\in \M_{m\times n}(\F)$ with $m\geq n\geq 1$
and $X\neq 0$. There is an invertible $A\in \M_{m}(\F)$ such that $AX=Y$ if and only if $\nullspace X=\nullspace Y$.
\end{corollary}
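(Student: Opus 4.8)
The plan is to prove both implications, with the forward direction being essentially immediate and the reverse direction requiring the structure set up in Theorem~\ref{Theorem:Basic}.

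For the forward direction, suppose there is an invertible $A \in \M_m(\F)$ with $AX = Y$. Then $\nullspace X \subseteq \nullspace Y$ by Theorem~\ref{Theorem:Basic} (or directly). For the reverse containment, if $Y\vec{u} = \vec{0}$, then $AX\vec{u} = \vec{0}$, and since $A$ is invertible, $X\vec{u} = \vec{0}$; thus $\nullspace Y \subseteq \nullspace X$. So $\nullspace X = \nullspace Y$ is necessary.

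For the reverse direction, assume $\nullspace X = \nullspace Y$. The key observation is that this forces $\rank Y = \rank X = r$, since $\dim \nullspace Y = \dim \nullspace X = n - r$. Now I would invoke the construction in the proof of Theorem~\ref{Theorem:Basic}: the targeting matrix $A = VBV^*$ is invertible precisely when $B$ is invertible, and $B$ has the block form in \eqref{eq:BasicB}, namely $B = \tworowvector{B_1}{B_2}$ with $B_1 = Z\Sigma_r^{-1} \in \M_{m\times r}(\F)$ determined by the data and $B_2 \in \M_{m\times(n-r)}(\F)$ free to choose. Since $B_1 = V^*YW_1 \Sigma_r^{-1}$ and both $V$, $\Sigma_r^{-1}$ are invertible while $W_1$ has full column rank $r$, we get $\rank B_1 = \rank(YW_1) = \rank Y = r$ (using that the columns of $W_1$ span $\colspace X^* = \colspace Y^*$, so right-multiplication by $W_1$ does not drop the rank of $Y$). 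Thus $B_1$ has full column rank $r$, and its $r$ columns are linearly independent in $\F^m$. The main (and only) real step is then to choose $B_2$ so that $B = \tworowvector{B_1}{B_2}$ is an invertible $m \times m$ matrix: extend the $r$ independent columns of $B_1$ to a basis of $\F^m$ by appending $m - r = n - r$ further columns, and take those to be the columns of $B_2$. Then $B$ is invertible, hence $A = VBV^*$ is invertible with $AX = Y$.

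I do not anticipate a serious obstacle; the one point that deserves care is confirming $\rank(YW_1) = \rank Y$, i.e.\ that multiplying $Y$ on the right by the column-orthonormal $W_1$ preserves the rank. This holds because $\colspace W_1 = (\nullspace X)^\perp$ and $\nullspace Y = \nullspace X$, so $W_1$ has full column rank and its columns meet $\nullspace Y$ trivially; equivalently $YW_1$ and $Y$ have the same column space. One could alternatively phrase the whole reverse direction without the SVD machinery — pick any $A_0$ with $A_0 X = Y$ from Theorem~\ref{Theorem:Basic}, observe $\rank A_0 \ge r$, and perturb $A_0$ on $(\colspace X)^{\perp}$ to make it invertible — but routing through the already-established block-completion picture of \eqref{eq:BasicB} is cleanest and matches the remark preceding the corollary about invertibility forcing $B_1$ to have full rank.
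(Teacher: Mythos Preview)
Your proof is correct and follows essentially the same approach as the paper: both directions match, and the reverse direction routes through the block-completion picture of \eqref{eq:BasicB}, showing $\rank B_1 = r$ (the paper does this via the chain $\rank Y = \rank(V^*YW) = \rank[Z\ 0] = \rank Z = \rank B_1$, which is your $\rank(YW_1) = \rank Y$ observation in slightly different clothing) and then choosing $B_2$ to complete $B$ to an invertible matrix. One small slip: $B_2 \in \M_{m\times(m-r)}(\F)$, not $\M_{m\times(n-r)}(\F)$, and you need to append $m-r$ columns, not $n-r$; these coincide only when $m=n$.
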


\begin{proof}
($\Rightarrow $) If $Y=AX$ and $A$ is invertible, then $X=A^{-1}Y$.
Consequently, $\nullspace X\subseteq \nullspace Y$ and $\nullspace Y\subseteq \nullspace X$, so $\nullspace X=\nullspace Y$.

\medskip\noindent($\Leftarrow $) Let $r=\rank X\geq 1$. There is a choice of $B_{2}$ in \eqref{eq:BasicB} 
that makes $B$ invertible if and only if $\rank B_{1}=r$.
If $\nullspace X=\nullspace Y$, then $\rank X=\rank Y$, and the relations
\begin{equation}\label{eq:rankZ}
    \rank Y=\rank ( V^*YW) =\rank  
    \tworowvector{Z}{0}
     =\rank Z=\rank ( Z\Sigma_{r}^{-1}) =\rank  B_{1}  
\end{equation}
ensure that $\rank B_{1}=r$.
\end{proof}

\section{Two Lemmas}
The following lemma memorializes
some consequences of the basic assumption $\nullspace X\subseteq \nullspace  Y$.

\begin{lemma}
    Let $X,Y\in \M_{m\times n}(\F)$ with $m\geq n\geq 1$ and $X\neq 0$.
    Suppose that $\nullspace X\subseteq \nullspace Y$ and adopt the notation used in the proof of Theorem \ref{Theorem:Basic}. 
    Then $X^*Y$ is unitarily
    similar to $\Sigma_{r}Z_{1}\oplus 0_{n-r}$, in which $Z_{1}=V_{1}^{*}YW_{1}$.
\end{lemma}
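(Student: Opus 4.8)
The plan is to compute $W^*(X^*Y)W$ directly and recognize it as the claimed block-diagonal matrix; since $W$ is unitary, this establishes the asserted unitary similarity. First I would substitute the singular value decomposition $X=V\Sigma W^*$ to obtain $X^*Y = W\Sigma^* V^* Y$, and then conjugate by $W$, using $W^*W=I$, to reduce the claim to the identity $W^*(X^*Y)W = \Sigma^* V^* YW$.

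Next I would expand everything in the block notation adopted from the proof of Theorem \ref{Theorem:Basic}. Since $\Sigma\in\M_{m\times n}(\R)$ has the block form in \eqref{eq:Sigma} with $\Sigma_r$ real diagonal (hence $\Sigma_r^*=\Sigma_r$), we have $\Sigma^* = \Sigma_r\oplus 0 \in\M_{n\times m}(\R)$, so $\Sigma^* V^*$ retains only the first $r$ rows of $V^*$, giving $\bigl[\begin{smallmatrix}\Sigma_r V_1^*\\ 0\end{smallmatrix}\bigr]$. Multiplying on the right by $YW = [\,YW_1\ \ YW_2\,]$ produces a $2\times2$ block matrix whose $(1,1)$ block is $\Sigma_r V_1^* YW_1 = \Sigma_r Z_1$, whose $(1,2)$ block is $\Sigma_r V_1^* YW_2$, and whose second block row vanishes because of the zero row block of $\Sigma^*$.

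Finally I would invoke the fact, already recorded in the proof of Theorem \ref{Theorem:Basic}, that the hypothesis $\nullspace X\subseteq\nullspace Y$ forces $YW_2=0$ (the columns of $W_2$ form an orthonormal basis of $\nullspace X$). This kills the $(1,2)$ block and leaves $W^*(X^*Y)W = \Sigma_r Z_1\oplus 0_{n-r}$, which is precisely the assertion. There is no genuine obstacle here: the argument is a bookkeeping exercise with block partitions, and its only substantive ingredient, the relation $YW_2=0$, is supplied by the adopted notation together with the standing hypothesis.
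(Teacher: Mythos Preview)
Your proof is correct and follows essentially the same route as the paper: both compute $W^*(X^*Y)W$ by substituting the singular value decomposition of $X$, reduce to $\Sigma^{\T}(V^*YW)$, and use $YW_2=0$ (equivalently, the identity \eqref{eq:V*YW}) to obtain the block-diagonal form $\Sigma_r Z_1\oplus 0_{n-r}$. The only cosmetic difference is that the paper groups the product as $\Sigma^{\T}\cdot(V^*YW)$ and invokes \eqref{eq:V*YW} directly, whereas you first collapse $\Sigma^*V^*$ and then multiply by $YW$; the content is identical.
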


\begin{proof}\renewcommand{\qedsymbol}{}
    Compute 
    \begin{align}
        W^*( X^*Y) W 
        &= W^*( W\Sigma V^{*}) YW=\Sigma ( V^*YW) =\Sigma  
        \tworowvector{Z}{0}\notag \\
        &=  
        \begin{bmatrix}
        \Sigma_{r} & 0 \\ 
        0 & 0
        \end{bmatrix}
        \begin{bmatrix}
        Z_{1} & 0 \\ 
        Z_{2} & 0
        \end{bmatrix}
         = 
        \begin{bmatrix}
        \Sigma_{r}Z_{1} & 0 \\ 
        0 & 0
        \end{bmatrix}
         .  \rlap{$\qquad \Box$} \label{eq:BasicX*Y}
    \end{align}
\end{proof}

In our computations with block matrices, we require the three types of Schur complements in the next lemma.

\begin{lemma}\label{Lemma:SchurComplement}
    Let $H\in \M_{r}(\F)$, $L\in \M_{(m-r)\times r}(\F)$, and $\lambda \in \F$. Let 
    \begin{equation*}
        B= 
        \begin{bmatrix}
        H & L^* \\ 
        L & \lambda I_{m-r}
        \end{bmatrix}
         \in \M_{m}(\F).
    \end{equation*}
    \begin{enumerate}[leftmargin=*]
        \item If $\lambda \in \R\backslash \{0\}$, then $B$ is $^*$congruent to $( H-\lambda^{-1}L^*L) \oplus ( \lambda I_{m-r}) $. 
        \item If $H$ is Hermitian and invertible, then $B$ is $^*$congruent to $H\oplus ( \lambda I_{m-r}-LH^{-1}L^*) $. 
        \item If $H$ is Hermitian and $\nullspace H\subseteq \nullspace L$, then $B$ is $^*$congruent to $H\oplus ( \lambda I_{m-r}-LH^{\dagger}L^*) $.
    \end{enumerate}
\end{lemma}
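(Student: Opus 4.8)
The plan is to verify each part by exhibiting an explicit invertible matrix $S$ and computing $S^*BS$ directly; these are all standard block-Gaussian-elimination identities, so the work is bookkeeping rather than insight. For part (a), since $\lambda \neq 0$ is real, the block $\lambda I_{m-r}$ is invertible and Hermitian, so I would eliminate the off-diagonal blocks using it as a pivot. Concretely, take
\begin{equation*}
    S = \begin{bmatrix} I_r & -\lambda^{-1}L^* \\ 0 & I_{m-r} \end{bmatrix},
\end{equation*}
which is invertible, and compute $S^*BS$. The $(2,1)$ and $(1,2)$ blocks cancel because $L - \lambda(\lambda^{-1}L) = 0$, the $(2,2)$ block stays $\lambda I_{m-r}$, and the $(1,1)$ block becomes $H - \lambda^{-1}L^*L - \lambda^{-1}L^*L + \lambda^{-1}L^*L = H - \lambda^{-1}L^*L$ (one should double-check the arithmetic, but the cross terms collapse to a single copy of $\lambda^{-1}L^*L$ subtracted). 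Note that $S$ is real when $\lambda$ is real, but in any case $S \in \M_m(\F)$, so the $^*$congruence is legitimate over $\F$.

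For part (b), the roles are reversed: $H$ is Hermitian and invertible, so I would use $H$ as the pivot and eliminate $L$ and $L^*$. Take
\begin{equation*}
    S = \begin{bmatrix} I_r & -H^{-1}L^* \\ 0 & I_{m-r} \end{bmatrix},
\end{equation*}
which is invertible; then $S^*BS = H \oplus (\lambda I_{m-r} - LH^{-1}L^*)$, the Schur complement of $H$ in $B$. Here one uses that $H^{-1}$ is Hermitian because $H$ is, so the $(1,1)$ block comes out as $H$ exactly. This is the classical Schur complement factorization and requires only that $H$ be invertible; Hermiticity of $H$ is what makes both the block $H$ and the resulting $^*$congruence (as opposed to mere similarity) work out cleanly.

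Part (c) is the only part with a genuine wrinkle: $H$ need not be invertible, so $H^{-1}$ is replaced by the Moore--Penrose pseudoinverse $H^\dagger$, and the factorization above is no longer exact. The key fact I would invoke is that for a Hermitian $H$, the matrix $HH^\dagger = H^\dagger H$ is the orthogonal projection onto $\colspace H = \colspace H^*$, and the hypothesis $\nullspace H \subseteq \nullspace L$ is equivalent (as in the proof of Theorem \ref{Theorem:Basic}(a)) to $\colspace L^* \subseteq \colspace H^* = \colspace H$, hence $HH^\dagger L^* = L^*$ and, taking conjugate transposes, $LH^\dagger H = L$ (using that $H^\dagger$ is Hermitian since $H$ is). With
\begin{equation*}
    S = \begin{bmatrix} I_r & -H^\dagger L^* \\ 0 & I_{m-r} \end{bmatrix},
\end{equation*}
the $(2,2)$ block of $S^*BS$ is $\lambda I_{m-r} - LH^\dagger L^* - LH^\dagger L^* + LH^\dagger H H^\dagger L^* = \lambda I_{m-r} - LH^\dagger L^*$, where the last equality uses $H^\dagger H H^\dagger = H^\dagger$; the $(1,2)$ block is $L^* - HH^\dagger L^* = 0$ by the identity above, and symmetrically the $(2,1)$ block vanishes; and the $(1,1)$ block is $H$. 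The main obstacle, such as it is, is getting the pseudoinverse identities $HH^\dagger L^* = L^*$ and $LH^\dagger H = L$ from the nullspace containment and invoking $H^\dagger H H^\dagger = H^\dagger$ and $(H^\dagger)^* = H^\dagger$ at the right moments; once those are in hand the computation is formally identical to part (b).
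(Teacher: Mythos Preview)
Your approach is exactly the paper's: exhibit an explicit unit upper- or lower-triangular block matrix and compute the $^*$congruence directly, using $L(I-H^\dagger H)=0$ (equivalently $HH^\dagger L^*=L^*$) for part (c). The only slip is in part (a): with your choice of $S=\bigl[\begin{smallmatrix} I & -\lambda^{-1}L^*\\ 0 & I\end{smallmatrix}\bigr]$ it is $SBS^*$, not $S^*BS$, that equals $(H-\lambda^{-1}L^*L)\oplus \lambda I_{m-r}$ (equivalently, take $S=\bigl[\begin{smallmatrix} I & 0\\ -\lambda^{-1}L & I\end{smallmatrix}\bigr]$ and then $S^*BS$ works, which is what the paper writes); your verbal description of the cancellation matches $SBS^*$, so this is a bookkeeping typo rather than a gap.
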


\begin{proof}
(a) Compute the $^*$congruence
\begin{equation*}
    \begin{bmatrix}
        I_{r} & -\lambda^{-1}L^* \\ 
        0 & I_{m-r}
    \end{bmatrix}  
    \begin{bmatrix}
        H & L^* \\ 
        L & \lambda I_{m-r}
    \end{bmatrix}  
    \begin{bmatrix}
        I_{r} & 0 \\ 
        -\lambda^{-1}L & I_{m-r}
    \end{bmatrix}
     = 
    \begin{bmatrix}
        H-\lambda^{-1}L^*L & 0 \\ 
        0 & \lambda I_{m-r}
    \end{bmatrix}.
\end{equation*}

\medskip\noindent(b) The product
\begin{equation*}
    \begin{bmatrix}
        I_{r} & 0 \\ 
        -LH^{-1} & I_{m-r}
    \end{bmatrix}  
    \begin{bmatrix}
        H & L^* \\ 
        L & \lambda I_{m-r}
    \end{bmatrix}  
    \begin{bmatrix}
        I_{r} & -H^{-1}L^* \\ 
        0 & I_{m-r}
    \end{bmatrix}
     = 
    \begin{bmatrix}
        H & 0 \\ 
        0 & \lambda I_{m-r}-LH^{-1}L^*
    \end{bmatrix}
\end{equation*}
is a $^*$congruence since $H$ is Hermitian. 

\medskip\noindent(c) Since $I-H^{\dagger }H$ is the orthogonal projection onto $\nullspace H$
and $\nullspace H\subseteq \nullspace L$, we have $L(I_{m-r}-H^{\dagger}H)=0 $ and the product 
\begin{equation*}
    \begin{bmatrix}
        I_{r} & 0 \\ 
        -LH^{\dagger } & I_{m-r}
    \end{bmatrix}  
    \begin{bmatrix}
        H & L^* \\ 
        L & \lambda I_{m-r}
    \end{bmatrix}  
    \begin{bmatrix}
        I_{r} & -H^{\dagger }L^* \\ 
        0 & I_{m-r}
    \end{bmatrix}
     = 
    \begin{bmatrix}
        H & 0 \\ 
        0 & \lambda I_{m-r}-LH^{\dagger }L^*
    \end{bmatrix}
\end{equation*}
is a $^*$congruence.
\end{proof}

\section{Hermitian targeting}

\begin{theorem}\label{Theorem:Hermitian}
    Let $X,Y\in \M_{m\times n}(\F)$ with $m\geq n\geq 1$
    and $X\neq 0$. There is a Hermitian $A\in \M_{m}(\F)$ such that $AX=Y$
    if and only if $\nullspace X\subseteq \nullspace Y$ and $X^*Y$ is Hermitian.
\end{theorem}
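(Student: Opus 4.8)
The plan is to reuse the matrix‑completion strategy from the proof of Theorem~\ref{Theorem:Basic}, now with a Hermiticity constraint on the completion. Necessity is immediate: if $A = A^*$ and $AX = Y$, then $\nullspace X \subseteq \nullspace Y$ exactly as in Theorem~\ref{Theorem:Basic}, while $X^*Y = X^*AX$ has conjugate transpose $X^*A^*X = X^*AX$, so $X^*Y$ is Hermitian.

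For sufficiency, I would adopt the singular value decomposition notation from the proof of Theorem~\ref{Theorem:Basic}: $X = V\Sigma W^*$ with $\rank X = r$, and $Z = V^*YW_1 = \begin{bmatrix} Z_1 \\ Z_2 \end{bmatrix}$, where $Z_1 = V_1^*YW_1 \in \M_r(\F)$ and $Z_2 = V_2^*YW_1 \in \M_{(m-r)\times r}(\F)$. By that theorem it suffices to produce a Hermitian $B \in \M_m(\F)$ with $B\Sigma = V^*YW$, for then $A = VBV^*$ is Hermitian and $AX = Y$. Writing $B$ in the block form of \eqref{eq:BasicB},
\[
B = \begin{bmatrix} Z_1\Sigma_r^{-1} & B_{12} \\ Z_2\Sigma_r^{-1} & B_{22} \end{bmatrix},
\]
with free blocks $B_{12} \in \M_{r\times(m-r)}(\F)$ and $B_{22} \in \M_{m-r}(\F)$, the identity $B\Sigma = V^*YW$ holds no matter how $B_{12}$ and $B_{22}$ are chosen. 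So everything reduces to whether these free blocks can be picked to make $B$ Hermitian. The natural choice is $B_{12} := (Z_2\Sigma_r^{-1})^*$ and $B_{22} := 0_{m-r}$, which renders $B$ Hermitian \emph{provided} the leading block $Z_1\Sigma_r^{-1}$ is itself Hermitian.

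The one substantive point — and the step I expect to be the crux — is that the hypothesis ``$X^*Y$ is Hermitian'' is precisely what forces $Z_1\Sigma_r^{-1}$ to be Hermitian. Since $\Sigma_r$ is real, diagonal, and invertible, $(Z_1\Sigma_r^{-1})^* = \Sigma_r^{-1}Z_1^*$ equals $Z_1\Sigma_r^{-1}$ if and only if $Z_1^*\Sigma_r = \Sigma_r Z_1$, i.e., if and only if $\Sigma_r Z_1$ is Hermitian. By the lemma immediately preceding Lemma~\ref{Lemma:SchurComplement}, $X^*Y$ is unitarily similar to $\Sigma_r Z_1 \oplus 0_{n-r}$; hence $X^*Y$ Hermitian forces $\Sigma_r Z_1$ Hermitian, and therefore $Z_1\Sigma_r^{-1}$ Hermitian. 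With that, $B = B^*$, so $A = VBV^*$ is Hermitian and $AX = Y$, completing the argument.
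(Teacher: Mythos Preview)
Your proof is correct and follows essentially the same approach as the paper: both reduce to the matrix-completion problem from Theorem~\ref{Theorem:Basic}, set the off-diagonal free block to $(Z_2\Sigma_r^{-1})^*$, and use the unitary similarity $X^*Y \sim \Sigma_r Z_1 \oplus 0_{n-r}$ to conclude that $Z_1\Sigma_r^{-1}$ is Hermitian. The only cosmetic difference is that the paper takes $B_{22} = \lambda I_{m-r}$ for an arbitrary real $\lambda$ (your choice $B_{22}=0$ is the case $\lambda = 0$), which is done with an eye toward the invertible-Hermitian and positive-semidefinite corollaries that follow.
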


\begin{proof}
    ($\Rightarrow $) If $AX=Y$ and $A$ is Hermitian, then $\nullspace X\subseteq  \nullspace Y$ and 
    \begin{equation*}
        ( X^*Y)^*=( X^*AX)^*=X^{*}AX=X^*Y.
    \end{equation*}
    
    \medskip\noindent($\Leftarrow $) Let $r=\rank X$. It suffices to show that there is a
    choice of $B_{2}$ in \eqref{eq:BasicB} such that $B$ is Hermitian. Let
    \begin{equation}\label{eq:HermitianB}
        B= 
        \begin{bmatrix}
        Z_{1}\Sigma_{r}^{-1} & ( Z_{2}\Sigma_{r}^{-1})^* \\ 
        Z_{2}\Sigma_{r}^{-1} & \lambda I_{m-r}
        \end{bmatrix}
         = 
        \begin{bmatrix}
        B_{1} & B_{2}
        \end{bmatrix}
         ,  
    \end{equation}
    in which $\lambda \in \R$. Since $X^{*}Y$ is Hermitian, \eqref{eq:BasicX*Y} ensures that $\Sigma_{r}Z_{1}$ is
    Hermitian, which implies that $\Sigma_{r}^{-* }( \Sigma_{r}Z_{1}) \Sigma_{r}^{-1}=Z_{1}\Sigma_{r}^{-1}$ is Hermitian and
    hence $B$ is Hermitian. 
\end{proof}

\begin{corollary}\label{Corollary:InvertibleHermitian}
Let $X,Y\in \M_{m\times n}(\F)$ with $m\geq n\geq 1$ and $X\neq 0$. There is an invertible Hermitian 
$A\in \M_{m}(\F)$ such that $AX=Y$ if and only if $\nullspace X=\nullspace Y$ and $X^*Y$ is Hermitian.
\end{corollary}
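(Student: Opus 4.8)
The plan is to combine Corollary~\ref{Corollary:Invertible} and Theorem~\ref{Theorem:Hermitian}, then upgrade the construction behind Theorem~\ref{Theorem:Hermitian} so that the Hermitian targeting matrix is also invertible. For necessity, if $A\in\M_m(\F)$ is invertible and Hermitian with $AX=Y$, then $\nullspace X=\nullspace Y$ by Corollary~\ref{Corollary:Invertible}, and $X^*Y=X^*AX$ is Hermitian exactly as in Theorem~\ref{Theorem:Hermitian}.

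For sufficiency, suppose $\nullspace X=\nullspace Y$ and $X^*Y$ is Hermitian, and adopt the notation of the proofs of Theorem~\ref{Theorem:Basic} and Theorem~\ref{Theorem:Hermitian}: let $r=\rank X$, $H=Z_1\Sigma_r^{-1}$, $L=Z_2\Sigma_r^{-1}$, so $B_1=\begin{bmatrix}H\\ L\end{bmatrix}$ and $A=VBV^*$ with $B=\begin{bmatrix}B_1&B_2\end{bmatrix}$. Since $X^*Y$ is Hermitian, \eqref{eq:BasicX*Y} makes $H$ Hermitian, so (as in Theorem~\ref{Theorem:Hermitian}) the Hermitian completions of $B_1$ are exactly the matrices $B=\begin{bmatrix}H&L^*\\ L&S\end{bmatrix}$ with $S\in\M_{m-r}(\F)$ an arbitrary Hermitian matrix, and $\det A=\det B$. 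Because $\nullspace X=\nullspace Y$ forces $\rank Y=\rank X=r$, the rank identities \eqref{eq:rankZ} from the proof of Corollary~\ref{Corollary:Invertible} give $\rank B_1=r$. Hence everything reduces to the claim: \emph{if $H\in\M_r(\F)$ is Hermitian, $L\in\M_{(m-r)\times r}(\F)$, and $\begin{bmatrix}H\\ L\end{bmatrix}$ has full column rank $r$, then $\begin{bmatrix}H&L^*\\ L&S\end{bmatrix}$ is invertible for some Hermitian $S$.} I expect this claim to be the main obstacle: the choice $S=\lambda I_{m-r}$ from Theorem~\ref{Theorem:Hermitian} together with Lemma~\ref{Lemma:SchurComplement}(a) reduces it to showing the polynomial $t\mapsto\det(H-tL^*L)$ is not identically zero, which is true but does not seem to follow without roughly the same work.

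To prove the claim I would normalize $H$ by $^*$congruence. Orthogonally decompose $\F^r=\colspace H\oplus\nullspace H$; in a compatible orthonormal basis, $H=H_0\oplus 0_q$ with $H_0$ invertible Hermitian and $q=r-\rank H$, and $L=\tworowvector{L_0}{L_1}$ with $L_1\in\M_{(m-r)\times q}(\F)$. Full column rank of $\begin{bmatrix}H\\ L\end{bmatrix}$ forces $L_1$ to be injective (a nonzero $v$ with $L_1v=0$ would give $\begin{bmatrix}0\\ v\end{bmatrix}\in\nullspace\begin{bmatrix}H\\ L\end{bmatrix}$), so $q\le m-r$. A block $^*$congruence that clears $L_0$ against $H_0$ (in the spirit of Lemma~\ref{Lemma:SchurComplement}(b)) shows $\begin{bmatrix}H&L^*\\ L&S\end{bmatrix}$ is $^*$congruent to $H_0\oplus\begin{bmatrix}0_q&L_1^*\\ L_1&T\end{bmatrix}$, where $T=S-L_0H_0^{-1}L_0^*$ runs over all Hermitian matrices as $S$ does. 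Finally, orthogonally decompose $\F^{m-r}=\colspace L_1\oplus(\colspace L_1)^\perp$; in a compatible orthonormal basis $L_1=\begin{bmatrix}G\\ 0\end{bmatrix}$ with $G\in\M_q(\F)$ invertible. Taking $T=0_q\oplus I_{m-r-q}$ turns $\begin{bmatrix}0_q&L_1^*\\ L_1&T\end{bmatrix}$ into $\begin{bmatrix}0&G^*\\ G&0\end{bmatrix}\oplus I_{m-r-q}$, which is invertible since $\det\begin{bmatrix}0&G^*\\ G&0\end{bmatrix}=(-1)^q\lvert\det G\rvert^2\neq 0$. Unwinding, the corresponding $S$ makes $B$ invertible and Hermitian, so $A=VBV^*$ is an invertible Hermitian solution of $AX=Y$.
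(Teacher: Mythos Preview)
Your argument is correct, and it takes a genuinely different route from the paper's. Both proofs share the setup through \eqref{eq:rankZ}, reducing to the claim that the Hermitian block $B=\begin{bmatrix}H&L^*\\ L&S\end{bmatrix}$ can be made invertible when $\nullspace H\cap\nullspace L=\{\vec 0\}$. The paper sticks with $S=\lambda I_{m-r}$, applies Lemma~\ref{Lemma:SchurComplement}(a), and shows $f(z)=\det(H-z^{-1}L^*L)$ is not identically zero by evaluating at $z=i$: if $(H+iL^*L)\vec u=\vec 0$, then taking the imaginary part of $\vec u^*(H+iL^*L)\vec u=0$ forces $L\vec u=\vec 0$ and hence $H\vec u=\vec 0$, contradicting the null-space condition. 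Your approach instead enlarges the search to all Hermitian $S$, diagonalizes $H=H_0\oplus 0_q$, clears $L_0$ against the invertible block $H_0$ by a Schur complement, and then explicitly exhibits a $T$ (hence $S$) making the residual block $\begin{bmatrix}0_q&L_1^*\\ L_1&T\end{bmatrix}$ invertible. Your method is fully constructive and avoids any analytic or polynomial-identity argument; the paper's is shorter once one spots the $z=i$ trick, so your remark that the $S=\lambda I$ route ``does not seem to follow without roughly the same work'' undersells it.
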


\begin{proof}
    ($\Rightarrow $) It follows from Theorem \ref{Theorem:Hermitian} that $X^*Y$
    is Hermitian and $\nullspace X\subseteq \nullspace Y$. Since $X=A^{-1}Y$, it
    follows that $\nullspace Y\subseteq \nullspace X$, so $\nullspace X=\nullspace Y$.

    \medskip\noindent($\Leftarrow $) Let $r=\rank X$. 
    The rank-nullity theorem ensures that $\rank Y=r$ since $\dim \nullspace X=\dim \nullspace Y$. Let
    \begin{equation*}
        B= 
        \tworowvector{B_1}{B_2}
         = 
        \begin{bmatrix}
            Z_{1}\Sigma_{r}^{-1} & ( Z_{2}\Sigma_{r}^{-1})^* \\ 
            Z_{2}\Sigma_{r}^{-1} & \lambda I_{m-r}
        \end{bmatrix}
         = 
        \begin{bmatrix}
            H & L^* \\ 
            L & \lambda I_{m-r}
        \end{bmatrix}
    \end{equation*}
    be the Hermitian matrix defined in \eqref{eq:HermitianB}, in which $H=Z_{1}\Sigma_{r}^{-1}\in \M_{r}(\F)$ is Hermitian, 
    $L=Z_{2}\Sigma_{r}^{-1}\in \M_{(m-r)\times r}(\F)$, $B_{1}\in \M_{m\times r}(\F)$, and $\lambda \in \R$ is nonzero. 
    Then \eqref{eq:rankZ} ensures that $B_{1}$ has full column rank, and hence 
    \begin{equation}\label{eq:Intersection1}
        \nullspace H\cap \nullspace ( L^*L) =\nullspace H\cap \nullspace L=\{\vec{0}\}.  
    \end{equation}
    Lemma \ref{Lemma:SchurComplement}.a implies that
    \begin{equation*}
        \det B=( \det ( \lambda I_{m-r}) ) ( \det (H-\lambda^{-1}L^*L) ) =\lambda^{m-r}\det (H-\lambda^{-1}L^*L) .
    \end{equation*}
    We need to show that there is some real nonzero $\lambda $ such that $\det( H-\lambda^{-1}L^*L) \neq 0$. Consider the analytic
    function $f(z)=\det ( H-z^{-1}L^*L) $ on $\C \backslash \{0\}$. If $f(\lambda )=0$ for every real $\lambda \neq 0$, then 
    $f(z)=0$ for every $z\in \C\backslash \{0\}$. To show that $f(\lambda)\neq 0$ for some real nonzero $\lambda $, 
    it suffices to show that $f(i)\neq 0$. If $f(i)=0$, there is a nonzero $\vec{u}\in \C^{r}$ such that $( H+iL^*L) \vec{u}=\vec{0}$. However,
    \begin{align*}
        ( H+iL^*L) \vec{u}=\vec{0} 
        &\implies \vec{u}^* H \vec{u}+i \vec{u}^* L^*L\vec{u}=\vec{u}^*( H+iL^*L) \vec{u}=0 \\
        &\implies \operatorname{Im}( \vec{u}^*H\vec{u}+i\vec{u}^*L^*L\vec{u}) =0 \\
        &\implies ( L\vec{u})^*( L\vec{u}) =\vec{u}^*L^*L\vec{u}=0 \\
        &\implies L\vec{u}=\vec{0} \\
        &\implies H\vec{u}=( H+iL^*L) \vec{u}=\vec{0} \\
        &\implies \nullspace H\cap \nullspace L\neq \{\vec{0}\},
    \end{align*}
    which contradicts \eqref{eq:Intersection1}. We conclude that $B$ is invertible
    and Hermitian for some real nonzero $\lambda$.
\end{proof}

If a nonzero $Y\in \M_{m\times n}(\F)$ is given, what are
all the $X\in \M_{m\times n}(\F)$ such that $Y=AX$ for some Hermitian $A?$
To analyze this question we start with a singular value decomposition of $Y$ instead of $X$.

\begin{corollary}\label{Corollary:HermitianZ}
    Let $X,Y\in \M_{m\times n}(\F)$ with $m\geq n\geq 1$ and $r=\rank Y\geq 1$. Let $Y=V\Sigma W^*$ be a singular value
    decomposition, in which 
    \begin{equation*}
        V^*YW=\Sigma = 
        \begin{bmatrix}
            \Sigma_{r} & 0 \\ 
            0 & 0
        \end{bmatrix}
         \in \M_{m\times n}(\R)
    \end{equation*}
    and $\Sigma_{r}\in \M_{r}(\R)$ is diagonal and positive definite.
    Let $Z = V^*XW$ and partition
    \begin{equation*}
        Z = 
        \begin{bmatrix}
            Z_{11} & Z_{12} \\ 
            Z_{21} & Z_{22}
        \end{bmatrix}
    \end{equation*}
    conformally to $\Sigma $.  Let $Z_1 = \Big[\begin{smallmatrix}Z_{11} \\ Z_{21} \end{smallmatrix} \Big]\in \M_{m \times r}(\F)$.
    \begin{enumerate}[leftmargin=*]
        \item $X^*Y=Y^*X$ if and only if $Z_{12} = 0$ and $\Sigma_{r}Z_{11}=Z_{11}^*\Sigma_{r}$.

        \item If $Z_{12} = 0$, $\Sigma_{r}Z_{11}=Z_{11}^*\Sigma_{r}$, $\rank Z_1 = r$, and 
        $( Z_{21}\nullspace Z_{11}) \cap \colspace Z_{22}=\{\vec{0} \}$, then there is a Hermitian $A \in \M_m(\F)$ such that $AX = Y$.

        \item If $Z_{12} = 0$, $\Sigma_{r}Z_{11}=Z_{11}^*\Sigma_{r}$, and $Z_{11}$ is invertible, then for each choice of $Z_{21} \in \M_{(m-r)\times r}(\F)$ and $Z_{22} \in \M_{(m-r) \times (n-r)}(\F)$ there is a Hermitian $A \in \M_m(\F)$ such that $AX = Y$.
    \end{enumerate}
\end{corollary}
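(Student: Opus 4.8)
The plan is to transfer the problem, via the given singular value decomposition of $Y$, into a Hermitian targeting problem whose source is $Z$ and whose target is $\Sigma$. Since $Z = V^*XW$ and $V,W$ are unitary, we have $X = VZW^*$ and $Y = V\Sigma W^*$, so for any $A \in \M_m(\F)$ the substitution $C = V^*AV$ (a bijection of $\M_m(\F)$ onto itself that preserves the Hermitian property) converts $AX = Y$ into the equivalent equation $CZ = \Sigma$. For part (a) I would compute $X^*Y = WZ^*\Sigma W^*$ and $Y^*X = W\Sigma^*Z W^*$; since $W$ is unitary, $X^*Y = Y^*X$ if and only if $Z^*\Sigma = \Sigma^*Z$. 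Expanding both sides in the $2\times2$ block form conformal to $\Sigma$ gives $Z^*\Sigma = \big[\begin{smallmatrix} Z_{11}^*\Sigma_r & 0 \\ Z_{12}^*\Sigma_r & 0 \end{smallmatrix}\big]$ and $\Sigma^*Z = \big[\begin{smallmatrix}\Sigma_r Z_{11} & \Sigma_r Z_{12} \\ 0 & 0\end{smallmatrix}\big]$; equating blocks and using the invertibility of $\Sigma_r$ yields exactly $Z_{12} = 0$ and $\Sigma_r Z_{11} = Z_{11}^*\Sigma_r$, and the converse is immediate.

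For part (b), it suffices to produce a Hermitian $C$ with $CZ = \Sigma$, since then $A = VCV^*$ is Hermitian and $AX = Y$. I would apply Theorem~\ref{Theorem:Hermitian} with source $Z$ (which is nonzero because $\rank Z_1 = r \ge 1$) and target $\Sigma$: the condition ``$Z^*\Sigma$ is Hermitian'' is precisely the pair of hypotheses inherited from part (a), and the condition $\nullspace Z \subseteq \nullspace\Sigma$ is what the remaining hypotheses deliver. Indeed, writing $\vec v = (\vec v_1,\vec v_2)$ with $\vec v_1 \in \F^r$, one has $\Sigma\vec v = \vec 0$ iff $\vec v_1 = \vec 0$, while $Z_{12} = 0$ gives $Z\vec v = \vec 0$ iff $Z_{11}\vec v_1 = \vec 0$ and $Z_{21}\vec v_1 = -Z_{22}\vec v_2$. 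If $Z\vec v = \vec 0$, then $Z_{21}\vec v_1$ lies in $(Z_{21}\nullspace Z_{11}) \cap \colspace Z_{22} = \{\vec 0\}$, so $Z_{21}\vec v_1 = \vec 0$; hence $\vec v_1 \in \nullspace Z_{11} \cap \nullspace Z_{21} = \nullspace Z_1 = \{\vec 0\}$ because $\rank Z_1 = r$. Thus $\nullspace Z \subseteq \nullspace\Sigma$ and Theorem~\ref{Theorem:Hermitian} supplies the desired $C$.

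For part (c), the quickest route is the explicit construction $C = (\Sigma_r Z_{11}^{-1}) \oplus 0_{m-r}$: inverting and rearranging $\Sigma_r Z_{11} = Z_{11}^*\Sigma_r$ shows $\Sigma_r Z_{11}^{-1} = Z_{11}^{-*}\Sigma_r = (\Sigma_r Z_{11}^{-1})^*$, so $C$ is Hermitian, and a block multiplication using $Z_{12} = 0$ shows $CZ = \Sigma$, whence $A = VCV^*$ is a Hermitian solution. (Alternatively, invertibility of $Z_{11}$ forces $\nullspace Z_{11} = \{\vec 0\}$, so $Z_{21}\nullspace Z_{11} = \{\vec 0\}$ and $\nullspace Z_1 \subseteq \nullspace Z_{11} = \{\vec 0\}$; thus the hypotheses of part (b) hold and (c) follows from (b).) I expect the only genuinely delicate step to be the block bookkeeping in part (b) — confirming that the stated rank and intersection conditions are exactly what makes $\nullspace Z \subseteq \nullspace\Sigma$ once $Z_{12} = 0$; everything else reduces to a direct appeal to Theorem~\ref{Theorem:Hermitian} together with routine matrix algebra.
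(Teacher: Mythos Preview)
Your proposal is correct and follows essentially the same route as the paper. For (a) both you and the paper reduce $X^*Y=Y^*X$ to the block identity $Z^*\Sigma=\Sigma^{\T}Z$ and read off the two conditions; for (b) both arguments verify $\nullspace Z\subseteq\nullspace\Sigma$ from the rank and intersection hypotheses (the paper phrases this as a contrapositive, you phrase it directly) and then invoke Theorem~\ref{Theorem:Hermitian}---the only cosmetic difference is that you apply the theorem to the pair $(Z,\Sigma)$ and pull back via $A=VCV^*$, whereas the paper first records the equivalence $\nullspace X\subseteq\nullspace Y\iff\nullspace Z\subseteq\nullspace\Sigma$ and applies the theorem to $(X,Y)$. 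For (c) the paper simply reduces to (b), which you also note; your explicit solution $C=(\Sigma_r Z_{11}^{-1})\oplus 0_{m-r}$ is a pleasant extra that the paper does not give.
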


\begin{proof}
    (a) A computation reveals that $X^*Y$ is Hermitian if and only if
    \begin{align*}
        X^*Y=Y^*X 
        &\iff  X^*V\Sigma W^*=W\Sigma^{\T}V^*X \\
        &\iff ( W^*X^*V) \Sigma =\Sigma^{\T}( V^*XW) \\
        &\iff  
        \begin{bmatrix}
            Z_{11}^* & Z_{21}^* \\ 
            Z_{12}^* & Z_{22}^*
        \end{bmatrix}
        \begin{bmatrix}
            \Sigma_{r} & 0 \\ 
            0 & 0
        \end{bmatrix}
         = 
        \begin{bmatrix}
            \Sigma_{r} & 0 \\ 
            0 & 0
        \end{bmatrix}  
        \begin{bmatrix}
            Z_{11} & Z_{12} \\ 
            Z_{21} & Z_{22}
        \end{bmatrix}
         \\
        &\iff  
        \begin{bmatrix}
            Z_{11}^*\Sigma_{r} & 0 \\ 
            Z_{12}^*\Sigma_{r} & 0
        \end{bmatrix}
         = 
        \begin{bmatrix}
            \Sigma_{r}Z_{11} & \Sigma_{r}Z_{12} \\ 
            0 & 0
        \end{bmatrix}.
    \end{align*}
    Therefore, $X^*Y$ is Hermitian if and only if $Z_{12}=0$ and $\Sigma_{r}Z_{11}=Z_{11}^*\Sigma_{r}$. 
    
    \medskip\noindent(b) We claim that $\nullspace X\subseteq \nullspace Y$ if and only if $\nullspace Z\subseteq \nullspace \Sigma $. 
    To validate this claim, suppose that $\nullspace X\subseteq \nullspace Y$. Then
    \begin{align*}
        \vec{u}\in \nullspace Z 
        &\implies  \vec{0}=Z\vec{u}=(V^*XW) \vec{u}=\vec{0} \\
        &\implies X( W\vec{u}) =\vec{0} \\
        &\implies Y( W\vec{u}) =\vec{0} \\
        &\implies V^*Y( W\vec{u}) =\vec{0} \\
        &\implies \vec{0}=( V^*YW) \vec{u}=\Sigma \vec{u} \\
        &\implies \vec{u}\in \nullspace \Sigma ,
    \end{align*}
    from which we conclude that $\nullspace Z\subseteq \nullspace \Sigma $. The
    converse follows from a parallel argument. 
    Now assume that $Z_{12} = 0$ and $\Sigma_r Z_{11} = Z_{11}^* \Sigma_r$, which ensures that $X^*Y = Y^*X$.
    Let 
    $\vec{u}= 
    \big[
    \begin{smallmatrix}
    \vec{u}_{1} \\ 
    \vec{u}_{2}
    \end{smallmatrix}
    \big] \in \F^{n}$, in which $\vec{u}_{1}\in \F^{r}$.
    Then $\vec{u}\in \nullspace \Sigma $ if and only if $\vec{u}_{1}=\vec{0}$. 
    If $\rank Z_1= r$ and there is a $\vec{u} \in \nullspace Z$ such that $\vec{u}_1 \neq \vec{0}$,
    then
    \begin{equation*}
        \vec{0}=Z\vec{u}= 
        \begin{bmatrix}
        Z_{11} & 0 \\ 
        Z_{21} & Z_{22}
        \end{bmatrix}
        \begin{bmatrix}
        \vec{u}_{1} \\ 
        \vec{u}_{2}
        \end{bmatrix}
         = 
        \begin{bmatrix}
        Z_{11}\vec{u}_{1} \\ 
        Z_{21}\vec{u}_{1}+Z_{22}\vec{u}_{2}
        \end{bmatrix}
         = 
        \begin{bmatrix}
        \vec{0} \\ 
        Z_{21}\vec{u}_{1}+Z_{22}\vec{u}_{2}
        \end{bmatrix}.
    \end{equation*}
    Since $\rank Z_1 = r$, it follows that $Z_{21} \vec{u}_1 \neq \vec{0}$,
    which implies that $Z_{22} \vec{u}_2 \neq\vec{0}$ and hence
    $( Z_{21}\nullspace Z_{11}) \cap \colspace Z_{22}\neq \{\vec{0}\}$.
    We conclude that $\nullspace Z \subseteq \nullspace \Sigma$. 
    
    \medskip\noindent(c) If $Z_{11}$ is invertible, then $\rank Z_1 = r$ and $\nullspace Z_{11}=\{\vec{0}\}$, so
    $( Z_{21}\nullspace Z_{11}) \cap \colspace Z_{22} \subseteq Z_{21} \nullspace Z_{11} =\{\vec{0}\}$.
\end{proof}

\section{Positive semidefinite targeting}

We now investigate the positive semidefinite and positive definite targeting
problems. An important property of positive semidefinite matrices $A$ is
that $A\vec{u}=\vec{0}$ if and only if $\vec{u}^*A\vec{u}=0$; see \cite[Corollary 15.1.18]{GH2} or \cite[Observation 7.1.6]{MA2}.

\begin{theorem}\label{Theorem:PSD}
Let $X,Y\in \M_{m\times n}(\F)$ with $m\geq n\geq 1$ and $X\neq 0$. 
There is a positive semidefinite $A\in \M_{m}(\F)$ such that $AX=Y$ if and only if $X^*Y\geq 0$ and $\nullspace Y=\nullspace (X^*Y)$.
\end{theorem}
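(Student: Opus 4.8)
The plan is to prove necessity by direct computation and sufficiency by returning to the singular value decomposition machinery of Theorem~\ref{Theorem:Basic}, where the task reduces to choosing the completion block $B_2$ in \eqref{eq:BasicB} so that $B$ is positive semidefinite; the extra null-space hypothesis is exactly what makes that possible.

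For necessity, suppose $A \geq 0$ and $AX = Y$. Then $X^*Y = X^*AX$, and $\vec v^*(X^*AX)\vec v = (X\vec v)^*A(X\vec v) \geq 0$ for every $\vec v$, so $X^*Y \geq 0$. The inclusion $\nullspace Y \subseteq \nullspace(X^*Y)$ always holds; for the reverse, if $(X^*Y)\vec u = X^*AX\vec u = \vec 0$, then $(X\vec u)^*A(X\vec u) = 0$, so the characterization of positive semidefinite matrices recalled just before the theorem forces $AX\vec u = \vec 0$, that is, $Y\vec u = \vec 0$. Hence $\nullspace Y = \nullspace(X^*Y)$.

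For sufficiency, note first that $X^*Y \geq 0$ forces $X^*Y$ to be Hermitian, i.e.\ $X^*Y = Y^*X$; combined with $\nullspace Y = \nullspace(X^*Y)$, this yields $\nullspace X \subseteq \nullspace Y$, since $X\vec u = \vec 0$ implies $X^*Y\vec u = Y^*X\vec u = \vec 0$, so $\vec u \in \nullspace(X^*Y) = \nullspace Y$. Thus the setup of Theorem~\ref{Theorem:Basic} is available; adopt its notation, with $X = V\Sigma W^*$, $r = \rank X$, and $Z = V^*YW_1 = \Big[\begin{smallmatrix} Z_1 \\ Z_2 \end{smallmatrix}\Big]$. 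Take $B$ as in \eqref{eq:HermitianB}, writing $H = Z_1\Sigma_r^{-1}$ and $L = Z_2\Sigma_r^{-1}$, with $\lambda \in \R$ still to be chosen; then $B_1 = Z\Sigma_r^{-1}$, so $B\Sigma = V^*YW$ for every $\lambda$, and it remains only to arrange $B \geq 0$. By \eqref{eq:BasicX*Y}, $X^*Y$ is unitarily similar to $\Sigma_rZ_1 \oplus 0_{n-r}$, so $X^*Y \geq 0$ gives $\Sigma_rZ_1 \geq 0$, hence $H = \Sigma_r^{-1}(\Sigma_rZ_1)\Sigma_r^{-1} \geq 0$ by $^*$congruence.

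The crux is to show that $\nullspace Y = \nullspace(X^*Y)$ is equivalent to $\nullspace H \subseteq \nullspace L$, which is exactly the hypothesis that permits applying Lemma~\ref{Lemma:SchurComplement}.c. Since $\nullspace H = \Sigma_r(\nullspace Z_1)$ and $\nullspace L = \Sigma_r(\nullspace Z_2)$, we have $\nullspace H \subseteq \nullspace L$ if and only if $\nullspace Z_1 \subseteq \nullspace Z_2$, i.e.\ if and only if $\nullspace Z_1 = \nullspace Z_1 \cap \nullspace Z_2 = \nullspace Z$. Expanding a general vector as $W_1\vec v_1 + W_2\vec v_2$ and using $YW_2 = 0$ (from $\nullspace X \subseteq \nullspace Y$), one computes $\nullspace Y = W_1(\nullspace Z) \oplus \colspace W_2$, while \eqref{eq:BasicX*Y} gives $\nullspace(X^*Y) = W_1(\nullspace Z_1) \oplus \colspace W_2$; comparing these (and using that $W_1$ has trivial kernel) shows $\nullspace Y = \nullspace(X^*Y)$ if and only if $\nullspace Z = \nullspace Z_1$, as claimed. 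With $H \geq 0$ and $\nullspace H \subseteq \nullspace L$ in hand, Lemma~\ref{Lemma:SchurComplement}.c shows $B$ is $^*$congruent to $H \oplus (\lambda I_{m-r} - LH^\dagger L^*)$; since $LH^\dagger L^* \geq 0$ is a fixed Hermitian matrix, taking $\lambda$ no smaller than its largest eigenvalue makes $B \geq 0$ (if $m = r$ this second summand is absent and $B = H \geq 0$), and then $A = VBV^*$ is positive semidefinite with $AX = Y$ by Theorem~\ref{Theorem:Basic}. I expect the main obstacle to be precisely that bookkeeping identifying $\nullspace Y = \nullspace(X^*Y)$ with $\nullspace H \subseteq \nullspace L$; once it is established, the choice of $\lambda$ is routine.
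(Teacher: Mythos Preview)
Your proof is correct and follows the same strategy as the paper: both directions are argued identically, and the sufficiency reduces to the same Hermitian completion \eqref{eq:HermitianB}, verifies $H\geq 0$ via \eqref{eq:BasicX*Y}, establishes $\nullspace H\subseteq\nullspace L$, and then invokes Lemma~\ref{Lemma:SchurComplement}.c with $\lambda$ large. The only cosmetic difference is in that null-space step: the paper compresses it to the rank identity $\rank H=\rank(X^*Y)=\rank Y=\rank B_1$ (via \eqref{eq:rankZ} and \eqref{eq:BasicX*Y}), forcing $\nullspace H=\nullspace B_1=\nullspace H\cap\nullspace L$, whereas you unpack the same fact through the explicit decompositions $\nullspace Y=W_1(\nullspace Z)\oplus\colspace W_2$ and $\nullspace(X^*Y)=W_1(\nullspace Z_1)\oplus\colspace W_2$.
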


\begin{proof}
    ($\Rightarrow $) If $AX=Y$ and $A\geq 0$, then $X^*Y=X^*AX$,
    which is positive semidefinite. To verify the assertion about null spaces, use the fact that $X^*Y\geq 0$ and observe that
    \begin{align*}
        \vec{u}\in \nullspace ( X^*Y) 
        &\implies (X^*Y) \vec{u}=\vec{0} \\
        &\implies \vec{u}^*( X^*Y) \vec{u}=0 \\
        &\implies \vec{u}^*( X^*AX) \vec{u}=0 \\
        &\implies ( X\vec{u})^*A( X\vec{u}) =0 \\
        &\implies A( X\vec{u}) =\vec{0} \\
        &\implies ( AX) \vec{u}=\vec{0} \\
        &\implies Y\vec{u}=\vec{0} \\
        &\implies \vec{u}\in \nullspace Y.
    \end{align*}
    
    \medskip\noindent
    ($\Leftarrow $) Let $r=\rank X$. Since $X^*Y$ is Hermitian, we
    have $\nullspace X\subseteq \nullspace (Y^*X)=\nullspace (X^*Y)=\nullspace Y$, so $\nullspace X\subseteq \nullspace Y$. The hypotheses of
    Theorem \ref{Theorem:Hermitian} are satisfied, and its proof shows that
    \begin{equation}\label{eq:PSDB}
        B= 
        \begin{bmatrix}
        B_{1} & B_{2}
        \end{bmatrix}
         = 
        \begin{bmatrix}
        Z_{1}\Sigma_{r}^{-1} & ( Z_{2}\Sigma_{r}^{-1})^* \\ 
        Z_{2}\Sigma_{r}^{-1} & \lambda I_{m-r}
        \end{bmatrix}
         = 
        \begin{bmatrix}
        H & L^* \\ 
        L & \lambda I_{m-r}
        \end{bmatrix}
    \end{equation}
    is Hermitian for any $\lambda >0$. The identity \eqref{eq:BasicX*Y} and the
    assumption $X^*Y\geq 0$ imply that $\Sigma_{r}Z_{1}\geq 0$, so 
    $\Sigma_{r}^{-* }( \Sigma_{r}Z_{1}) \Sigma_{r}^{-1}=Z_{1}\Sigma_{r}^{-1}=H\geq 0$. 
    The hypothesis $\nullspace Y=\nullspace (X^*Y)$, the fact that $H\geq 0$, and \eqref{eq:rankZ} ensure that
    \begin{equation}\label{eq:nullHL}
        \nullspace H=\nullspace B_{1}=\nullspace H\cap \nullspace L\subseteq \nullspace L.  
    \end{equation}
    Let $\lambda_{1} \geq 0$ be the
    largest eigenvalue of the positive semidefinite matrix $LH^{\dagger }L^*$. We claim that $B\geq 0$ if $\lambda \geq \lambda_{1}$.
    Lemma \ref{Lemma:SchurComplement}.c and \eqref{eq:nullHL} ensure that $B$ is $^*$congruent to $H\oplus ( \lambda I_{m-r}-LH^{\dagger }L^*) $, which is
    positive semidefinite if $\lambda I_{m-r}-LH^{\dagger }L^*\geq 0$. If $\lambda \geq \lambda_{1}$ and $\vec{x}\in \F^{m-r}$, then
    \begin{equation*}
        \vec{x}^*( \lambda I_{m-r}-LH^{\dagger }L^*) 
        \vec{x}=\lambda \vec{x}^*\vec{x}-\vec{x}^*(LH^{\dagger }L^*) \vec{x}\geq ( \lambda -\lambda_{1}) \norm{ \vec{x} }^{2}\geq 0,
    \end{equation*}
    which validates our claim.
\end{proof}

\begin{corollary}
    Let $X,Y\in \M_{m\times n}(\F)$ with $m\geq n\geq 1$ and $X\neq 0$.
    There is a positive definite $A\in \M_{m}(\F)$ such that $AX=Y$ if
    and only if $X^*Y\geq 0$, $\nullspace Y=\nullspace (X^*Y)$, and $\rank Y=\rank X$.
\end{corollary}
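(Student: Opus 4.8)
The plan is to combine the positive semidefinite targeting theorem (Theorem~\ref{Theorem:PSD}) with the rank analysis used in Corollary~\ref{Corollary:InvertibleHermitian}, adding the one extra ingredient that distinguishes definiteness from semidefiniteness: a positive semidefinite $A$ is positive definite if and only if it is invertible. So the targeting matrix $A = VBV^*$ built in the proof of Theorem~\ref{Theorem:PSD} is positive definite exactly when the Hermitian positive semidefinite completion $B$ in \eqref{eq:PSDB} can be chosen invertible.

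For the ($\Rightarrow$) direction I would note that if $A > 0$ solves $AX = Y$, then in particular $A \geq 0$, so Theorem~\ref{Theorem:PSD} gives $X^*Y \geq 0$ and $\nullspace Y = \nullspace(X^*Y)$; and since $A$ is invertible we have $X = A^{-1}Y$, whence $\nullspace Y \subseteq \nullspace X$, and combined with $\nullspace X \subseteq \nullspace Y$ (always true when $AX = Y$) we get $\nullspace X = \nullspace Y$, hence $\rank X = \rank Y$ by rank–nullity. For ($\Leftarrow$), I would set $r = \rank X = \rank Y$ and invoke the proof of Theorem~\ref{Theorem:PSD}: its hypotheses hold, so $B$ as in \eqref{eq:PSDB} is Hermitian, $H = Z_1\Sigma_r^{-1} \geq 0$, and $B \geq 0$ for all $\lambda \geq \lambda_1$, where $\lambda_1$ is the largest eigenvalue of $LH^\dagger L^*$. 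The new point is that $\rank Y = r$ forces, via \eqref{eq:rankZ}, that $B_1 = \big[\begin{smallmatrix} H \\ L\end{smallmatrix}\big]$ has full column rank $r$, which by \eqref{eq:nullHL} gives $\nullspace H = \nullspace H \cap \nullspace L = \{\vec{0}\}$, so $H > 0$.

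Now I would argue that when $H > 0$, the Schur complement in Lemma~\ref{Lemma:SchurComplement}.b applies: $B$ is $^*$congruent to $H \oplus (\lambda I_{m-r} - LH^{-1}L^*)$. Choosing $\lambda$ strictly larger than the largest eigenvalue of $LH^{-1}L^*$ makes the second block positive definite, and $H$ is already positive definite, so this direct sum is positive definite; since $^*$congruence preserves invertibility and (given Hermitian) definiteness, $B > 0$. Then $A = VBV^*$ is positive definite and, by the proof of Theorem~\ref{Theorem:Basic}, solves $AX = Y$.

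I do not expect a serious obstacle here — the argument is essentially a refinement of two proofs already in the paper — but the one place to be careful is the logical bookkeeping around null spaces: one must check that the three hypotheses $X^*Y \geq 0$, $\nullspace Y = \nullspace(X^*Y)$, and $\rank X = \rank Y$ together actually imply $\nullspace H = \{\vec 0\}$ (equivalently $H$ invertible), since that is what licenses the switch from Lemma~\ref{Lemma:SchurComplement}.c to the cleaner Lemma~\ref{Lemma:SchurComplement}.b. Alternatively, one could avoid Lemma~\ref{Lemma:SchurComplement}.b altogether and instead observe that with $H > 0$ the pseudoinverse $H^\dagger = H^{-1}$, so the semidefinite estimate from Theorem~\ref{Theorem:PSD} already shows $\lambda I_{m-r} - LH^{-1}L^* > 0$ for $\lambda > \lambda_1$ and $B$ is then $^*$congruent to a positive definite direct sum — but invoking Lemma~\ref{Lemma:SchurComplement}.b keeps the exposition parallel to Corollary~\ref{Corollary:InvertibleHermitian}.
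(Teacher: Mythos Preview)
Your proposal is correct and follows essentially the same route as the paper: both directions match, and in particular the $(\Leftarrow)$ argument deduces $H>0$ from $\rank Y=r$ (you via \eqref{eq:rankZ} and \eqref{eq:nullHL}, the paper via the chain $r=\rank Y=\rank(X^*Y)=\rank Z_1=\rank H$), then applies Lemma~\ref{Lemma:SchurComplement}.b with $\lambda$ exceeding the largest eigenvalue of $LH^{-1}L^*$ to make $B$ positive definite.
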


\begin{proof}
    ($\Rightarrow $) If $A\in \M_{m}(\F)$ is positive definite and $AX=Y$,
    then $A$ is invertible and $\rank X=\rank Y$. Moreover, $\nullspace Y=\nullspace ( X^*Y) $ and $X^*Y\geq 0$, as in the
    preceding theorem.
    
    \medskip\noindent($\Leftarrow $) Let $r=\rank X$. Since $\rank Y=\rank X$ and $\nullspace Y=\nullspace (X^*Y)$, it follows that 
    \begin{equation*}
        r=\rank Y=\rank (X^*Y)=\rank Z_{1}=\rank H.
    \end{equation*}
    Thus, $H\geq 0$ (as in the preceding theorem), $H\in \M_{r}(\F)$ is
    invertible, and $H$ is positive definite. Lemma \ref{Lemma:SchurComplement}.b 
    ensures that $B$ in \eqref{eq:PSDB} is $^*$congruent to $H\oplus( \lambda I_{n-r}-LH^{-1}L^*) $, which is positive definite
    if $\lambda $ is greater than the largest eigenvalue of $LH^{-1}L^{*}$.
\end{proof}

\begin{corollary}
    Let $X,Y\in \M_{m\times n}(\F)$ with $m\geq n\geq 1$ and $\rank X=n$. 
    There is a positive definite $A\in \M_{m}$ such that $AX=Y$ if and only if $X^*Y>0$.
\end{corollary}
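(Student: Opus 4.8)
The plan is to derive this from the preceding corollary rather than reproving everything from scratch. When $\rank X = n$, the matrix $X$ has full column rank, so $\nullspace X = \{\vec 0\}$. I would first observe that in this situation $\nullspace Y = \{\vec 0\}$ as well: indeed, the hypothesis $X^*Y > 0$ forces $X^*Y$ to be invertible, hence $\rank(X^*Y) = n$; since $\rank(X^*Y) \le \rank Y \le n$, we get $\rank Y = n$ and $\nullspace Y = \{\vec 0\}$. Thus $\nullspace Y = \nullspace(X^*Y) = \{\vec 0\}$ automatically, and $\rank Y = n = \rank X$. So all three hypotheses of the previous corollary are satisfied, and it yields a positive definite $A$ with $AX = Y$.

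For the converse direction, if $A \in \M_m(\F)$ is positive definite and $AX = Y$, then $X^*Y = X^*AX$. Since $A > 0$ and $X$ has full column rank $n$, the matrix $X^*AX$ is positive definite: it is Hermitian, and for any nonzero $\vec u \in \F^n$ we have $X\vec u \ne \vec 0$ (as $\nullspace X = \{\vec 0\}$), so $\vec u^* X^* A X \vec u = (X\vec u)^* A (X\vec u) > 0$. Hence $X^*Y > 0$.

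The only thing requiring a moment of care is the equivalence $X^*Y > 0 \iff \big(X^*Y \ge 0$, $\nullspace Y = \nullspace(X^*Y)$, $\rank Y = \rank X\big)$ under the standing assumption $\rank X = n$; once that bookkeeping is settled the result is immediate from the previous corollary. I do not expect any genuine obstacle here — the point of the statement is that when $X$ has full column rank, the two null-space conditions collapse and the single clean condition $X^*Y > 0$ suffices.
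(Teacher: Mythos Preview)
Your proposal is correct and follows essentially the same route as the paper: both directions are argued exactly as in the paper's own proof, reducing the $(\Leftarrow)$ direction to the previous corollary by observing that $X^*Y>0$ forces $\rank Y=n$ and hence $\nullspace Y=\nullspace(X^*Y)=\{\vec 0\}$. Your write-up is slightly more detailed (you spell out why $X^*AX>0$ when $X$ has full column rank), but the argument is the same.
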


\begin{proof}
    ($\Rightarrow $) If $A\in \M_{m}(\F)$ is positive definite and $AX=Y$,
    then $X^*Y=X^*AX$, which is positive definite since $X$ has full rank.
    
    \medskip\noindent($\Leftarrow $) If $X^*Y>0$, then $n=\rank ( X^{*}Y) \leq \rank Y$, 
    so $\rank Y=n$ and $\nullspace Y=\{\vec{0}\}=\nullspace (X^*Y)$. The preceding corollary ensures that
    there is a positive definite $A\in \M_{m}$ such that $AX=Y$.
\end{proof}

\section{Unitary targeting}

The unitary targeting problem generalizes the familiar fact that, for given $\vec{x},\vec{y}\in \F^{n}$, there is a unitary 
$U\in \M_{n}(\F)$ such that $\vec{y}=U\vec{x}$ if and only if $\norm{\vec{x}} =\norm{ \vec{y} }$.

\begin{theorem}
    Let $X,Y\in \M_{m\times n}(\F)$ with $m\geq n\geq 1$ and $X\neq 0$.
    There is a unitary $A\in \M_{m}(\F)$ such that $AX=Y$ if and only if $X^*X=Y^*Y$.
\end{theorem}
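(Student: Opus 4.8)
The forward direction is immediate: if $A$ is unitary and $AX=Y$, then $Y^*Y=(AX)^*(AX)=X^*A^*AX=X^*X$. So the work is in the converse, and the plan is to exploit the SVD machinery already set up in the proof of Theorem~\ref{Theorem:Basic}, together with the observation that unitarity is invariant under unitary similarity (so it suffices to complete the block matrix $B$ in \eqref{eq:BasicB} to a unitary matrix).

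First I would unpack the hypothesis $X^*X=Y^*Y$ in SVD coordinates. Writing $X=V\Sigma W^*$ as in the proof of Theorem~\ref{Theorem:Basic}, the condition $X^*X=Y^*Y$ says $W\Sigma^{\mathsf T}\Sigma W^*=Y^*Y$, i.e. $\Sigma^{\mathsf T}\Sigma=W^*Y^*YW=(YW)^*(YW)$. Since $\Sigma^{\mathsf T}\Sigma=\Sigma_r^2\oplus 0_{n-r}$, this forces $YW_2=0$ (so in particular $\nullspace X\subseteq\nullspace Y$, as always) and, with $Z=V^*YW_1$ as in \eqref{eq:V*YW}, it gives $Z^*Z=(YW_1)^*(YW_1)=\Sigma_r^2$. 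Thus $Z\Sigma_r^{-1}\in\M_{m\times r}(\F)$ has orthonormal columns: $(Z\Sigma_r^{-1})^*(Z\Sigma_r^{-1})=\Sigma_r^{-1}Z^*Z\Sigma_r^{-1}=I_r$. But $B_1=Z\Sigma_r^{-1}$ in \eqref{eq:BasicB}, so the first block column of $B$ already consists of $r$ orthonormal columns of length $m$.

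Now the problem reduces to a standard completion: extend an $m\times r$ matrix $B_1$ with orthonormal columns to an $m\times m$ unitary matrix $B=[B_1~B_2]$. This is classical — take $B_2$ to be any $m\times(m-r)$ matrix whose columns form an orthonormal basis of $(\colspace B_1)^\perp$ (the orthogonal complement is $(m-r)$-dimensional since $B_1$ has full column rank $r$), for instance by completing to an orthonormal basis of $\F^m$ or via a QR factorization of a generic extension. Then $B$ is unitary, and by \eqref{eq:BasicB}, $B\Sigma=[Z~0]=V^*YW$, so \eqref{eq:BasicSigma} holds and $A=VBV^*$ is unitary with $AX=Y$ by the argument in Theorem~\ref{Theorem:Basic}.

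The only point requiring a little care — and the one I'd flag as the "main obstacle," though it is mild — is that the columns of $B_1$ must genuinely be orthonormal in $\F^m$, not merely have Gram matrix $I_r$ after some rescaling; this is exactly what $Z^*Z=\Sigma_r^2$ delivers, and one should double-check that $\Sigma_r$ being real diagonal positive definite makes $\Sigma_r^{-1}=\Sigma_r^{-*}$ so that the computation $(Z\Sigma_r^{-1})^*(Z\Sigma_r^{-1})=I_r$ goes through cleanly. A secondary bookkeeping point is the case $r=n$ (full column rank $X$), where $W_2$ is absent and the condition $YW_2=0$ is vacuous; the argument is unchanged. Everything else is routine block arithmetic already present in the earlier proofs.
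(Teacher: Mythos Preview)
Your proposal is correct and follows essentially the same approach as the paper: both use the SVD of $X$ from Theorem~\ref{Theorem:Basic}, deduce from $X^*X=Y^*Y$ that $Z^*Z=\Sigma_r^2$ (hence $B_1=Z\Sigma_r^{-1}$ has orthonormal columns), and then complete $B_1$ to a unitary $B$ so that $A=VBV^*$ solves the problem. The only cosmetic difference is that the paper states $\nullspace X=\nullspace Y$ up front (citing that $X^*X=Y^*Y$ implies equal null spaces), whereas you derive $YW_2=0$ directly from the $(2,2)$ block of $(YW)^*(YW)=\Sigma_r^2\oplus 0$; these are the same observation.
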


\begin{proof}
    ($\Rightarrow $) If $Y=AX$ and $A$ is unitary, then $Y^*Y=(AX)^*( AX) =X^*( A^*A)X=X^*IX=X^*X$.
    
    \medskip\noindent($\Leftarrow $) Let $r=\rank X$. If $X^*X=Y^*Y$, then 
    $\nullspace X=\nullspace Y$ \cite[Theorem 15.1.9]{GH2} and \eqref{eq:V*YW}
    ensures that $B= 
    \begin{bmatrix}
    B_{1} & B_{2}
    \end{bmatrix}
     $ satisfies \eqref{eq:BasicSigma} with $B_{1}=Z\Sigma_{r}^{-1}$. It
    suffices to show that $B_{1}$ has orthonormal columns, since we may then
    choose $B_{2}$ to make $B$ unitary; see \cite[Corollary 6.3.14]{GH2}.
    Observe that
    \begin{equation*}
        W^*( Y^*Y) W
        =( V^*YW)^*(V^*YW) 
        = 
        \tworowvector{Z}{0}^* 
        \tworowvector{Z}{0}
         = 
        \begin{bmatrix}
            Z^*Z & 0 \\ 
            0 & 0
        \end{bmatrix}
         .
    \end{equation*}
    We also have
    \begin{equation*}
        W^*( X^*X) W=\Sigma^*\Sigma = 
        \begin{bmatrix}
            \Sigma_{r}^{2} & 0 \\ 
            0 & 0
        \end{bmatrix}.
    \end{equation*}
    Since $X^*X=Y^*Y$, we conclude that $Z^*Z=\Sigma_{r}^{2}$
    and hence 
    \begin{equation*}
        I_{r}=( Z\Sigma_{r}^{-1})^*( Z\Sigma
        _{r}^{-1}) =B_{1}^*B_{1},
    \end{equation*}
    that is, $B_{1}$ has orthonormal columns.
\end{proof}

The polar decomposition \cite[\S\ 16.3]{GH2} or \cite[\S\ 7.3]{MA2} provides
an alternative approach to the unitary targeting problem. If $X^{*}X=Y^*Y$ and $Q=( X^*X)^{1/2}$, 
then $X=U_{1}Q$ and $Y=V_{1}Q$, in which $U_{1},V_{1}\in \M_{m\times n}(\F)$ have
orthonormal columns. If 
$U= \tworowvector{U_1}{U_2}$ and $V= \tworowvector{V_1}{V_2}$ are unitary, then $VU^*$ is unitary and 
\begin{equation*}
    ( VU^*) X=( VU^*) U_{1}Q=V( U^{*}U_{1}) Q= 
    \tworowvector{V_1}{V_2}
    \begin{bmatrix}
    I \\ 
    0
    \end{bmatrix}
     Q=V_{1}Q=Y.
\end{equation*}
If $X$ and $Y$ do not have the same numbers of columns, there is an
interesting story to be told about the identity $X^*X=Y^*Y$. For
details, examples, and a historical review, see \cite{HO}.

\section{Reflection Targeting}

A matrix $A\in \M_{m}(\F)$ is a \emph{reflection} if it is a
Hermitian involution ($A=A^*$ and $A^{2}=I)$, or, equivalently, if it
is Hermitian and unitary. We now consider the reflection targeting problem.

\begin{theorem}\label{Theorem:Reflection}
    Let $X,Y\in \M_{m\times n}(\F)$ with $m\geq n\geq 1$ and $r=\rank Y\geq 1$. 
    There is a reflection $A\in \M_{m}(\F)$ such that $AX=Y$ if and only if $X^*Y$ is Hermitian and $X^*X=Y^*Y$.
\end{theorem}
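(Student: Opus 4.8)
The plan is to combine the Hermitian targeting criterion (Theorem \ref{Theorem:Hermitian}) with the unitary targeting criterion, since a reflection is precisely a matrix that is both Hermitian and unitary. The forward direction is immediate: if $A$ is a reflection with $AX = Y$, then $A$ is Hermitian, so $X^*Y = X^*AX$ is Hermitian; and $A$ is unitary, so $Y^*Y = X^*A^*AX = X^*X$. The content is in the converse, and here the difficulty is that knowing separately that \emph{some} Hermitian $A$ works and \emph{some} unitary $A$ works does not produce a single $A$ that is both. Instead I would revisit the matrix-completion setup of Theorem \ref{Theorem:Basic} and produce one completion $B$ that is simultaneously Hermitian and unitary, i.e. a reflection; then $A = VBV^*$ is a reflection solving $AX = Y$.

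Concretely, since $X^*X = Y^*Y$ forces $\nullspace X = \nullspace Y$, I work with a singular value decomposition $X = V\Sigma W^*$ and the block $B_1 = Z\Sigma_r^{-1}$ determined by the data, where $Z = V^*YW_1$. From the unitary targeting proof, $X^*X = Y^*Y$ gives $Z^*Z = \Sigma_r^2$, so $B_1$ has orthonormal columns; from the Hermitian targeting proof, $X^*Y$ Hermitian forces $H := Z_1\Sigma_r^{-1}$ to be Hermitian, where $B_1 = \big[\begin{smallmatrix} H \\ L \end{smallmatrix}\big]$ with $L = Z_2\Sigma_r^{-1}$. So I am handed a matrix $B_1 = \big[\begin{smallmatrix} H \\ L \end{smallmatrix}\big] \in \M_{m\times r}(\F)$ with orthonormal columns and $H$ Hermitian, and I must complete it to a reflection $B = \big[\begin{smallmatrix} H & L^* \\ L & C \end{smallmatrix}\big]$ — that is, choose Hermitian $C$ so that $B$ is unitary. (Taking the $(1,2)$ block to be $L^*$ is forced if $B$ is to be Hermitian.)

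The key linear-algebra fact I would invoke is the following: if $\big[\begin{smallmatrix} H \\ L \end{smallmatrix}\big]$ has orthonormal columns and $H = H^*$, then $H$ has all eigenvalues in $[-1,1]$ (since $H^*H + L^*L = I$ gives $H^2 \le I$), and one can take $C = -H$ restricted appropriately; more precisely, writing things in an eigenbasis of $H$, one checks that $B = \big[\begin{smallmatrix} H & L^* \\ L & C \end{smallmatrix}\big]$ is unitary with $C$ Hermitian. The cleanest route: since $B_1$ has orthonormal columns, $B_1 B_1^*$ is the orthogonal projection onto $\colspace B_1$, so $B_1^* B_1 = I_r$ and $I_m - B_1 B_1^*$ is an orthogonal projection of rank $m - r$. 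I would then verify directly that
\begin{equation*}
    B = B_1 B_1^* - (I_m - B_1 B_1^*) = 2 B_1 B_1^* - I_m
\end{equation*}
is a reflection (it is visibly Hermitian, and $B^2 = 4B_1 B_1^* B_1 B_1^* - 4 B_1 B_1^* + I_m = 4B_1B_1^* - 4B_1B_1^* + I_m = I_m$), and that its first $r$ columns are exactly $B_1$: indeed $B B_1 = 2 B_1 (B_1^* B_1) - B_1 = 2B_1 - B_1 = B_1$. Hence $B e_j = B_1 e_j$ for $j \le r$, so $B\Sigma = [B_1 \ \ *]\,\big[\begin{smallmatrix}\Sigma_r & 0 \\ 0 & 0\end{smallmatrix}\big] = [B_1\Sigma_r \ \ 0] = [Z \ \ 0] = V^*YW$, which is \eqref{eq:BasicSigma}. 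Therefore $A = VBV^*$ is a reflection and $AX = Y$. The main obstacle is recognizing that the free completion block should be chosen not by the generic Schur-complement padding used earlier, but by the reflection $2B_1B_1^* - I_m$ canonically attached to the subspace $\colspace B_1$; once that is seen, the verification is a short computation.
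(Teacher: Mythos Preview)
Your forward direction is fine, and your setup for the converse is sound: you correctly extract from the two hypotheses that $B_1=\big[\begin{smallmatrix}H\\L\end{smallmatrix}\big]$ has orthonormal columns with $H$ Hermitian, and you correctly observe that $B=2B_1B_1^*-I_m$ is a reflection. The gap is in the very last step. From $BB_1=B_1$ you conclude ``$Be_j=B_1e_j$ for $j\le r$,'' but these are different statements: $BB_1=B_1$ says $B$ fixes every vector in $\colspace B_1$, not that the first $r$ columns of $B$ coincide with $B_1$. Concretely, with $m=2$, $r=1$, $H=[0]$, $L=[1]$ (so $B_1=\big[\begin{smallmatrix}0\\1\end{smallmatrix}\big]$), your $B$ is $\big[\begin{smallmatrix}-1&0\\0&1\end{smallmatrix}\big]$, whose first column is $\big[\begin{smallmatrix}-1\\0\end{smallmatrix}\big]\neq B_1$. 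In general the first $r$ columns of $2B_1B_1^*-I_m$ are $\big[\begin{smallmatrix}2H^2-I_r\\2LH\end{smallmatrix}\big]$, which equals $B_1$ only under the accidental constraints $2H^2-H-I_r=0$ and $L(2H-I_r)=0$. So the chosen $B$ does not satisfy \eqref{eq:BasicSigma}, and the argument does not close.

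Your completion strategy can be repaired: what you actually need is a reflection sending $\big[\begin{smallmatrix}I_r\\0\end{smallmatrix}\big]$ to $B_1$, and the Householder-type choice $B=I_m-2F(F^*F)^{\dagger}F^*$ with $F=\big[\begin{smallmatrix}I_r-H\\-L\end{smallmatrix}\big]$ works (one checks $\big[\begin{smallmatrix}I_r+H\\L\end{smallmatrix}\big]^*F=I_r-H^2-L^*L=0$, so the two block columns are orthogonal). The paper sidesteps the SVD reduction entirely and carries out exactly this Householder construction directly on $X$ and $Y$: with $E=X+Y$, $F=X-Y$ one has $E^*F=(X^*X-Y^*Y)+(Y^*X-X^*Y)=0$, and $A=I-2F(F^*F)^{\dagger}F^*$ is a reflection with $AX=Y$. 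That route is shorter and avoids the block bookkeeping altogether.
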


\begin{proof}
    ($\Rightarrow $) If $A$ is a reflection and $AX=Y$, then $A$ is Hermitian
    and so is $X^*Y=X^*AX$. Since $A$ is unitary, $Y^*Y=(AX)^*AX=X^*A^*AX=X^*IX=X^*X$. 
    
    \medskip\noindent($\Leftarrow $) Suppose that $X^*Y=Y^*X$ and $X^*X=Y^{*}Y$. 
    Let $E=X+Y$ and $F=X-Y$, so that $X=\frac{1}{2}( E+F) $ and 
    $Y=\frac{1}{2}( E-F) $. Let $P=F( F^*F)^{\dagger}F^*$, which is the (necessarily Hermitian) orthogonal projection onto 
    $\colspace F$. Define the Hermitian matrix $A=I-2P$, which is a reflection since
    \begin{equation*}
        A^{2}=( I-2P)^{2}=I-4P+4P^{2}=I-4P+4P=I.
    \end{equation*}
    Observe that $\colspace E$ is orthogonal to $\colspace F$ since
    \begin{equation*}
        E^*F=( X+Y)^*( X-Y) =( X^{*}X-Y^*Y) +( Y^*X-X^*Y) =0+0=0.
    \end{equation*}
    Therefore, $PE=0$ and $PF=F$. Consequently,
    \begin{align*}
        AX 
        &= \frac{1}{2}( I-2P) ( E+F) =\frac{1}{2}(E+F-2PE-2PF) \\
        &= \frac{1}{2}( E+F-0-2F) =\frac{1}{2}( E-F) =Y. \qedhere
    \end{align*}
\end{proof}

For a given nonzero $Y\in \M_{m\times n}(\F)$, the following
corollary shows how to construct all the $X\in \M_{m\times n}(\F)$
such that $Y=AX$ for some reflection $A$.

\begin{corollary}
    Let $X,Y\in \M_{m\times n}(\F)$ with $m\geq n\geq 1$ and $r=\rank Y\geq 1$. 
    Let $Y=V\Sigma W^*$ be a singular value decomposition, in which 
    \begin{equation*}
        V^*YW=\Sigma = 
        \begin{bmatrix}
            \Sigma_{r} & 0 \\ 
            0 & 0
        \end{bmatrix}
         \in \M_{m\times n}(\R)
    \end{equation*}
    and $\Sigma_{r}\in \M_{r}(\R)$ is diagonal and positive definite.
    There is a reflection $A\in \M_{m}(\F)$ such that $AX=Y$ if and only if
    \begin{equation}\label{eq:Reflect}
        V^*XW= 
        \begin{bmatrix}
            U_{11}\Sigma_{r} & 0 \\ 
            U_{21}\Sigma_{r} & 0
        \end{bmatrix},
    \end{equation}
    in which $U_{11}\in \M_{r}(\F)$ is Hermitian and 
    $U_{1}= \big[\begin{smallmatrix} U_{11} \\ U_{21} \end{smallmatrix} \big] \in \M_{m\times r}(\F)$ has orthonormal columns.
\end{corollary}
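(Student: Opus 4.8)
The plan is to invoke Theorem \ref{Theorem:Reflection}, which asserts that a reflection $A$ with $AX = Y$ exists if and only if $X^*Y$ is Hermitian and $X^*X = Y^*Y$, and then to translate both conditions into statements about $Z := V^*XW$. Write $Z = \big[\begin{smallmatrix} Z_{11} & Z_{12} \\ Z_{21} & Z_{22}\end{smallmatrix}\big]$ conformally to $\Sigma$, with $Z_{11}\in\M_{r}(\F)$, and set $Z_1 = \big[\begin{smallmatrix} Z_{11} \\ Z_{21}\end{smallmatrix}\big]\in\M_{m\times r}(\F)$ and $Z_2 = \big[\begin{smallmatrix} Z_{12} \\ Z_{22}\end{smallmatrix}\big]$. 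Since $V$ and $W$ are unitary, $X = VZW^*$.

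First I would compute $X^*X = WZ^*ZW^*$ and $Y^*Y = W\Sigma^{\T}\Sigma W^* = W(\Sigma_{r}^{2}\oplus 0_{n-r})W^*$, so that $X^*X = Y^*Y$ is equivalent to $Z^*Z = \Sigma_{r}^{2}\oplus 0_{n-r}$. The $(2,2)$ block of $Z^*Z$ is $Z_2^*Z_2$, so this forces $Z_2 = 0$, i.e.\ $Z_{12} = 0$ and $Z_{22} = 0$, and the $(1,1)$ block then yields $Z_1^*Z_1 = \Sigma_{r}^{2}$. Likewise, $X^*Y = WZ^*\Sigma W^*$ is Hermitian if and only if $Z^*\Sigma$ is Hermitian; since $Z^*\Sigma = \big[\begin{smallmatrix} Z_{11}^*\Sigma_{r} & 0 \\ Z_{12}^*\Sigma_{r} & 0\end{smallmatrix}\big]$ while $(Z^*\Sigma)^* = \Sigma^{\T}Z = \big[\begin{smallmatrix} \Sigma_{r}Z_{11} & \Sigma_{r}Z_{12} \\ 0 & 0\end{smallmatrix}\big]$, and $\Sigma_{r}$ is invertible, this is equivalent to $Z_{12} = 0$ together with $\Sigma_{r}Z_{11} = Z_{11}^*\Sigma_{r}$.

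The last step is to recognize the asserted normalized form. Put $U_1 := Z_1\Sigma_{r}^{-1}$, partitioned as $U_1 = \big[\begin{smallmatrix} U_{11} \\ U_{21}\end{smallmatrix}\big]$ with $U_{11} = Z_{11}\Sigma_{r}^{-1}$. Then $Z_1^*Z_1 = \Sigma_{r}^{2}$ is equivalent to $U_1^*U_1 = I_{r}$, i.e.\ $U_1$ has orthonormal columns; and, using $\Sigma_{r}^{-*} = \Sigma_{r}^{-1}$, the relation $\Sigma_{r}Z_{11} = Z_{11}^*\Sigma_{r}$ is equivalent to $U_{11}^* = \Sigma_{r}^{-1}Z_{11}^* = \Sigma_{r}^{-1}\Sigma_{r}Z_{11}\Sigma_{r}^{-1} = U_{11}$, that is, $U_{11}$ Hermitian. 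Substituting $Z_{11} = U_{11}\Sigma_{r}$, $Z_{21} = U_{21}\Sigma_{r}$, $Z_{12} = 0$, $Z_{22} = 0$ into $V^*XW = Z$ produces exactly \eqref{eq:Reflect}, and reading the chain of equivalences in both directions finishes the proof. There is no genuine obstacle here beyond careful block bookkeeping; the only point meriting attention is that the two hypotheses of Theorem \ref{Theorem:Reflection} jointly annihilate the last $n-r$ columns of $Z$ together with its lower-right block, while pinning $Z_1$ down to $U_1\Sigma_{r}$ with $U_1$ isometric and its top block Hermitian — the same normalization $U = Z\Sigma_{r}^{-1}$ used in the preceding proofs.
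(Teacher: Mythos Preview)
Your argument is correct and follows essentially the same route as the paper: invoke Theorem \ref{Theorem:Reflection}, conjugate both conditions by $W$ and $V$ to rewrite them as constraints on $Z=V^*XW$, and then rescale by $\Sigma_r^{-1}$ to obtain the isometric $U_1$ with Hermitian top block. The only cosmetic differences are that the paper treats the two directions separately and quotes Corollary \ref{Corollary:HermitianZ} for the Hermitian condition on $X^*Y$, whereas you redo that short computation inline and phrase everything as a single chain of equivalences.
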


\begin{proof}
    ($\Rightarrow $) Partition
    \begin{equation*}
        V^*XW= 
        \begin{bmatrix}
            Z_{11} & Z_{12} \\ 
            Z_{21} & Z_{22}
        \end{bmatrix}
    \end{equation*}
    conformally to $\Sigma $. Since $X^*Y$ is Hermitian, Corollary \ref{Corollary:HermitianZ} 
    ensures that $Z_{12}=0$ and $\Sigma_{r}Z_{11}=Z_{11}^{*}\Sigma_{r}$. Since $X^*X=Y^*Y$, we have
    \begin{align}
        W^*X^*XW 
        &=  
        \begin{bmatrix}
        Z_{11}^* & Z_{21}^* \\ 
        0 & Z_{22}^*
        \end{bmatrix}
        \begin{bmatrix}
        Z_{11} & 0 \\ 
        Z_{21} & Z_{22}
        \end{bmatrix}
          \notag \\
        &=  
        \begin{bmatrix}
        Z_{11}^*Z_{11}+Z_{21}^*Z_{21} & Z_{21}^*Z_{22} \\ 
        Z_{22}^*Z_{21} & Z_{22}^*Z_{22}
        \end{bmatrix}
         = 
        \begin{bmatrix}
        \Sigma_{r}^{2} & 0 \\ 
        0 & 0
        \end{bmatrix}
          \label{eq:Zcon2} \\
        &= W^*Y^*YW.  \notag
    \end{align}
    A comparison of entries in \eqref{eq:Zcon2} reveals that $Z_{22}^{*}Z_{22}=0$, and hence $Z_{22}=0$. If we partition
    \begin{equation*}
    V^*XW= 
    \tworowvector{Z_1}{0},
    \end{equation*}
    another comparison of entries shows that $Z_{1}^*Z_{1}=\Sigma_{r}^{2}$.  
    It follows from the polar decomposition that $Z_{1}=U_{1}\Sigma_{r}$, in which 
    $U_{1}= \big[ \begin{smallmatrix} U_{11} \\  U_{21} \end{smallmatrix} \big] \in \M_{m\times r}(\F)$ has orthonormal columns. The
    condition $\Sigma_{r}Z_{11}=Z_{11}^*\Sigma_{r}$ ensures that 
    $\Sigma_{r}U_{11}\Sigma_{r}=\Sigma_{r}U_{11}^*\Sigma_{r}$, so $U_{11}$ is Hermitian. 
    
    \medskip\noindent($\Leftarrow $) If $V^*XW$ can be partitioned as in \eqref{eq:Reflect},
    then
    \begin{align*}
        W^*( X^*X) W &=  
        \begin{bmatrix}
        \Sigma_{r}U_{1}^* \\ 
        0
        \end{bmatrix}
        \begin{bmatrix}
        U_{1}\Sigma_{r} & 0
        \end{bmatrix}     
        =  
        \begin{bmatrix}
        \Sigma_{r}U_{1}^*U_{1}\Sigma_{r} & 0 \\ 
        0 & 0
        \end{bmatrix}
        \\
         &= 
        \begin{bmatrix}
        \Sigma_{r}^{2} & 0 \\ 
        0 & 0
        \end{bmatrix}
         =W^*( Y^*Y) W,
    \end{align*}
    which shows that $X^*X=Y^*Y$. Also,
    \begin{align*}
        W^*( X^*Y) W 
        &=  
        \begin{bmatrix}
        \Sigma_{r}U_{11}^* & \Sigma_{r}U_{21}^* \\ 
        0 & 0
        \end{bmatrix}  
        \begin{bmatrix}
        \Sigma_{r} & 0 \\ 
        0 & 0
        \end{bmatrix}
         \\
        &=  
        \begin{bmatrix}
        \Sigma_{r}U_{11}^*\Sigma_{r} & 0 \\ 
        0 & 0
        \end{bmatrix}
         = 
        \begin{bmatrix}
        \Sigma_{r}U_{11}\Sigma_{r} & 0 \\ 
        0 & 0
        \end{bmatrix}
    \end{align*}
    is Hermitian, which shows that $X^*Y$ is Hermitian. The preceding
    theorem now ensures that there is a reflection $A\in \M_{m}(\F)$ such that $AX=Y$.
\end{proof}

One consequence of Theorem \ref{Theorem:Reflection} is that, for given 
$\vec{x},\vec{y}\in \C^{n}$, there is a reflection $A\in \M_{n}(\C)$ such that $A\vec{x}=\vec{y}$ if and only if 
$\norm{\vec{x}} =\norm{ \vec{y} }$ and $\vec{x}^*\vec{y}$ is real; the latter requirement is superfluous if 
$\F=\R$. In fact, $A$ may be chosen to be a scalar multiple of a Householder matrix in this case. 
Our proof of Theorem \ref{Theorem:Reflection} constructs a reflection matrix that is a natural analog
of a Householder matrix.

\section{Orthogonal Projection Targeting}

A computation with the Moore--Penrose pseudoinverse provides a necessary and
sufficient condition for there to be a solution to the orthogonal projection targeting problem.

\begin{theorem}\label{Theorem:OrthogProj}
    Let $X,Y\in \M_{m\times n}(\F)$ with $m\geq n\geq 1$ and $Y\neq 0$. There is an orthogonal projection $A\in \M_{m}(\F)$
    such that $AX=Y$ if and only if $Y^*X=Y^*Y$.
\end{theorem}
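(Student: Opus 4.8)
The plan is to prove both directions by exploiting the structure of an orthogonal projection. Recall that $A$ is an orthogonal projection precisely when $A = A^* = A^2$. For the forward direction, assume $A$ is an orthogonal projection with $AX = Y$. Then $Y^*Y = (AX)^*(AX) = X^*A^*AX = X^*A^2X = X^*AX$, using $A^* = A$ and $A^2 = A$. On the other hand, $Y^*X = (AX)^*X = X^*A^*X = X^*AX$ as well. Comparing these two computations gives $Y^*X = X^*AX = Y^*Y$, which is the stated condition. This direction is essentially immediate.

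For the converse, suppose $Y^*X = Y^*Y$. First I would note that this forces $\nullspace X \subseteq \nullspace Y$: if $X\vec{u} = \vec 0$ then $Y^*Y\vec{u} = Y^*X\vec{u} = \vec 0$, so $\|Y\vec{u}\|^2 = \vec{u}^*Y^*Y\vec{u} = 0$ and hence $Y\vec{u} = \vec 0$. This lets us invoke Theorem \ref{Theorem:Basic}, so at least some targeting matrix exists; the work is to produce one that is an orthogonal projection. The natural candidate is $A = Y(Y^\dagger)$-type expression, or more precisely the orthogonal projection $P$ onto $\colspace Y$, namely $A = YY^\dagger = Y(Y^*Y)^\dagger Y^*$. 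I would then verify $AX = Y$ directly: $AX = Y(Y^*Y)^\dagger Y^*X = Y(Y^*Y)^\dagger (Y^*Y) = Y(Y^*Y)^\dagger(Y^*Y)$, and since $(Y^*Y)^\dagger(Y^*Y)$ is the orthogonal projection onto $\colspace(Y^*Y)^* = \colspace Y^*$, and $\colspace Y^* \supseteq$ the relevant columns, we get $Y(Y^*Y)^\dagger(Y^*Y) = Y$ because $Y(Y^*Y)^\dagger(Y^*Y) = Y$ is a standard pseudoinverse identity (it is just $YY^\dagger Y = Y$ rewritten). So $AX = Y^*X$ substituted back yields exactly $Y$.

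The only real obstacle is bookkeeping with the pseudoinverse identities, and making sure the hypothesis $Y^*X = Y^*Y$ (rather than $Y^*X = X^*Y$ or some symmetric variant) is used in precisely the right spot — it is what converts $Y^*X$ into $Y^*Y$ inside the expression $Y(Y^*Y)^\dagger Y^* X$. I would double-check that $A = YY^\dagger$ is genuinely an orthogonal projection (it is: $(YY^\dagger)^* = YY^\dagger$ and $(YY^\dagger)^2 = YY^\dagger$ are among the defining Moore--Penrose relations), and that the case $Y \neq 0$ is used only to guarantee $A \neq 0$ is a bona fide (nonzero) projection, though $A = 0$ would also be a valid orthogonal projection if $Y = 0$. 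Alternatively, one could run the argument through the singular value decomposition of $X$ as in Theorem \ref{Theorem:Basic}, translating the condition $Y^*X = Y^*Y$ into a statement about the block $B_1$ and completing $B$ to a projection; but the pseudoinverse route is cleaner and I would present that.
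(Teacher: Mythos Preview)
Your proposal is correct and follows essentially the same approach as the paper: both directions match, and for the converse you choose exactly the same targeting matrix $A = Y(Y^*Y)^\dagger Y^*$ (the orthogonal projection onto $\colspace Y$) and verify $AX=Y$ via the same pseudoinverse identity. The detour through $\nullspace X \subseteq \nullspace Y$ and Theorem~\ref{Theorem:Basic} is unnecessary since you never actually use it, but it does no harm.
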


\begin{proof}
    ($\Rightarrow $) If $Y=AX$, $A$ is Hermitian, and $A^{2}=A$, then 
    \begin{equation*}
        Y^*Y=( AX)^*( AX) =X^*A^{*}AX=X^*A^{2}X=X^*( AX) =X^*Y.
    \end{equation*}

    \medskip\noindent($\Leftarrow $) If $Y^*X=Y^*Y$, let $A=Y( Y^*Y)^{\dagger }Y^*$. Then $A$ is Hermitian and
    \begin{equation*}
        A^{2}=Y( Y^*Y)^{\dagger }( Y^*Y) (Y^*Y)^{\dagger }Y^*=Y( Y^*Y)^{\dagger}Y^*=A,
    \end{equation*}
    so $A$ is an orthogonal projection. Since $( Y^*Y)^{\dagger }( Y^*Y)$ 
    is the orthogonal projection on $\colspace( Y^*Y)^*=\colspace ( Y^*Y) =\colspace Y^*$, it follows that 
    $( Y^*Y)^{\dagger}( Y^*Y) Y^*=Y^*$ and hence $Y=Y( Y^{*}Y) ( Y^*Y)^{\dagger }$. Therefore,
    \begin{equation*}
        AX=Y( Y^*Y)^{\dagger }( Y^*X) =Y(Y^*Y)^{\dagger }( Y^*Y) =Y. \qedhere
    \end{equation*}
\end{proof}

We can use a singular value decomposition of $Y$ (note the change of strategy here) to identify all solutions $X$ to
the orthogonal projection targeting problem $AX=Y$ for a given $Y$.

\begin{corollary}
    Let $X,Y\in \M_{m\times n}(\F)$ with $m\geq n\geq 1$ and $r=\rank Y\geq 1$. Let $Y=V\Sigma W^*$ be a singular value decomposition, in which 
    \begin{equation*}
        V^*YW=\Sigma = 
        \begin{bmatrix}
            \Sigma_{r} & 0 \\ 
            0 & 0
        \end{bmatrix}
         \in \M_{m\times n}(\R)
    \end{equation*}
    and $\Sigma_{r}\in \M_{r}(\R)$ is diagonal and positive definite. Partition
    \begin{equation*}
        V^*XW=Z= 
        \begin{bmatrix}
            Z_{11} & Z_{12} \\ 
            Z_{21} & Z_{22}
        \end{bmatrix}
    \end{equation*}
    conformally to $\Sigma $. There is an orthogonal projection $A\in \M_{m}(\F)$ such that $AX=Y$ if and only if
    \begin{equation*}
        V^*XW= 
        \begin{bmatrix}
            \Sigma_{r} & 0 \\ 
            Z_{21} & Z_{22}
        \end{bmatrix},
    \end{equation*}
    in which $Z_{21}\in \M_{(m-r)\times r}(\F)$ and $Z_{22}\in \M_{(m-r)\times (n-r)}(\F)$.
\end{corollary}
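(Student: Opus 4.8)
The plan is to reduce this corollary to Theorem~\ref{Theorem:OrthogProj}, which says that an orthogonal projection $A$ with $AX=Y$ exists if and only if $Y^*X=Y^*Y$. So the whole task is to translate the algebraic condition $Y^*X=Y^*Y$ into the stated block shape of $V^*XW$, using the singular value decomposition $Y=V\Sigma W^*$.

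First I would compute both sides of $Y^*X=Y^*Y$ after conjugating by the unitary factors, exactly as in the proof of Corollary~\ref{Corollary:HermitianZ}(a). Writing $Z=V^*XW$ partitioned conformally to $\Sigma$, one has $Y^*X = W\Sigma^{\T}V^*X = W\Sigma^{\T}(V^*XW)W^* = W\Sigma^{\T}Z W^*$, and similarly $Y^*Y = W\Sigma^{\T}\Sigma W^*$. Thus $Y^*X=Y^*Y$ is equivalent to $\Sigma^{\T}Z=\Sigma^{\T}\Sigma$. Multiplying out the block product, $\Sigma^{\T}Z = \big[\begin{smallmatrix}\Sigma_r Z_{11} & \Sigma_r Z_{12}\\ 0 & 0\end{smallmatrix}\big]$ and $\Sigma^{\T}\Sigma = \big[\begin{smallmatrix}\Sigma_r^2 & 0\\ 0 & 0\end{smallmatrix}\big]$. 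Since $\Sigma_r$ is invertible, comparing blocks gives exactly $Z_{11}=\Sigma_r$ and $Z_{12}=0$, with $Z_{21}$ and $Z_{22}$ unconstrained. That is precisely the asserted form of $V^*XW$.

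For the ($\Leftarrow$) direction I would reverse this: if $V^*XW=\big[\begin{smallmatrix}\Sigma_r & 0\\ Z_{21} & Z_{22}\end{smallmatrix}\big]$, then the block computation above shows $\Sigma^{\T}Z=\Sigma^{\T}\Sigma$, hence $Y^*X=Y^*Y$, and Theorem~\ref{Theorem:OrthogProj} furnishes the desired orthogonal projection. For ($\Rightarrow$), an orthogonal projection $A$ with $AX=Y$ gives $Y^*X=Y^*Y$ by Theorem~\ref{Theorem:OrthogProj}, and the equivalence just established pins down the block form of $V^*XW$.

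There is no real obstacle here — the argument is a routine SVD conjugation followed by a block-entry comparison, entirely parallel to Corollary~\ref{Corollary:HermitianZ}(a). The one small point worth stating carefully is that the equation $\Sigma^{\T}Z=\Sigma^{\T}\Sigma$ determines only the first block row of $Z$ (because $\Sigma^{\T}$ kills everything below row $r$), which is why $Z_{21},Z_{22}$ remain free; invertibility of $\Sigma_r$ is what lets us cancel it on the left to read off $Z_{11}=\Sigma_r$ and $Z_{12}=0$ rather than merely $\Sigma_r Z_{11}=\Sigma_r^2$ and $\Sigma_r Z_{12}=0$.
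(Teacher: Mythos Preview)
Your proposal is correct and follows essentially the same route as the paper: invoke Theorem~\ref{Theorem:OrthogProj} to reduce to $Y^*X=Y^*Y$, conjugate by the SVD factors to obtain $\Sigma^{\T}Z=\Sigma^{\T}\Sigma$, and compare blocks using the invertibility of $\Sigma_r$. The paper's version is terser but the argument is the same.
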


\begin{proof}
    Compare entries in
    \begin{align*}
        Y^*Y=Y^*X 
        &\iff W\Sigma^{\T}\Sigma W^*=W\Sigma^{\T}V^*VZW^* \\
        &\iff \Sigma^{\T}\Sigma =\Sigma^{\T}Z \\
        &\iff  
        \begin{bmatrix}
        \Sigma_{r}^{2} & 0 \\ 
        0 & 0
        \end{bmatrix}
         = 
        \begin{bmatrix}
            \Sigma_{r}Z_{11} & \Sigma_{r}Z_{12} \\ 
            0 & 0
        \end{bmatrix}
         . \qedhere
    \end{align*}
\end{proof}

\section{Complex symmetric targeting}

A real symmetric matrix is Hermitian, but a complex symmetric matrix need
not even be normal. However, there is an analog of Theorem \ref{Theorem:Hermitian}
for complex symmetric targeting.

\begin{theorem}
    Let $X,Y\in \M_{m\times n}(\C)$ with $m\geq n\geq 1$ and $X\neq 0$.
    There is a complex symmetric $A\in \M_{m}(\C)$ such that $AX=Y$ if
    and only if $\nullspace X\subseteq \nullspace Y$ and $X^{\T}Y$ is complex symmetric.
\end{theorem}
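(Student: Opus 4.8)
The plan is to imitate the proof of Theorem~\ref{Theorem:Hermitian}, replacing conjugate transposes by transposes and Hermitian symmetry by complex symmetry, but with one extra twist at the end because the natural block to adjoin in a singular value decomposition of $X$ is not automatically complex symmetric. The necessity direction is immediate: if $A$ is complex symmetric and $AX=Y$, then $\nullspace X\subseteq\nullspace Y$ as always, and $(X^{\T}Y)^{\T}=(X^{\T}AX)^{\T}=X^{\T}A^{\T}X=X^{\T}AX=X^{\T}Y$, so $X^{\T}Y$ is complex symmetric.

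For sufficiency, adopt the notation of Theorem~\ref{Theorem:Basic}: $X=V\Sigma W^*$ with $V,W$ unitary, $r=\rank X$, $\Sigma_r$ diagonal positive definite, $Z=V^*YW_1$, $Z_1=V_1^*YW_1$, $Z_2=V_2^*YW_1$, and $B_1=Z\Sigma_r^{-1}$. By that theorem it suffices to choose a completion $B=[B_1\ B_2]\in\M_m(\C)$ that is complex symmetric, since then $A=VBV^*$ need not be complex symmetric --- and this is the first obstacle. The matrix $A=VBV^*$ is complex symmetric iff $V^{\T}VB=BV^{\T}V$, i.e.\ iff $B$ commutes with the (symmetric unitary) matrix $V^{\T}V$; it is genuinely complex symmetric only when $B$ is complex symmetric \emph{and} we are free to absorb the unitary freedom correctly. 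The clean fix is to choose the singular value decomposition of $X$ so that $V$ may be taken \emph{real} (an orthogonal matrix) on the block we care about, or more simply: observe that $X^{\T}Y$ complex symmetric, together with \eqref{eq:BasicX*Y}'s transpose analog, controls the top-left block. Let me instead set up the right coordinates directly.

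Here is the cleaner route. Write $X=V\Sigma W^*$ as above and compute $W^{\T}(X^{\T}Y)W = \Sigma^{\T}V^{\T}YW$; partitioning, the $(1,1)$ block of $\Sigma^{\T}V^{\T}YW$ is $\Sigma_r (V^{\T}Y W_1)$, and since the rest of $\Sigma^{\T}$ is zero we get that $X^{\T}Y$ complex symmetric forces $V^{\T}YW_2=0$ and $\Sigma_r(V_1^{\T}YW_1)=(V_1^{\T}YW_1)^{\T}\Sigma_r$. Now set $\widetilde{Z}_1=V_1^{\T}YW_1$ and $\widetilde{Z}_2=V_2^{\T}YW_1$ and $\widetilde H=\widetilde Z_1\Sigma_r^{-1}$; the displayed relation gives $\Sigma_r\widetilde H=\widetilde H^{\T}\Sigma_r$, hence $\Sigma_r^{-1}(\Sigma_r\widetilde H)\Sigma_r^{-1}=\widetilde H\Sigma_r^{-1}$ wait---more directly $\widetilde H$ is complex symmetric since $\widetilde H^{\T}=\Sigma_r^{-1}\widetilde Z_1^{\T}=\Sigma_r^{-1}(\Sigma_r\widetilde H\Sigma_r^{-1})=\widetilde H$ using $\widetilde Z_1^{\T}\Sigma_r=\Sigma_r\widetilde Z_1$. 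Then the completion
\begin{equation*}
    \widetilde B=\begin{bmatrix}\widetilde H & \widetilde Z_2^{\T}\\ \widetilde Z_2 & \lambda I_{m-r}\end{bmatrix},\qquad \lambda\in\C,
\end{equation*}
is complex symmetric, and one checks exactly as in Theorem~\ref{Theorem:Hermitian}, using $\widetilde B\Sigma = \big[\begin{smallmatrix} \widetilde Z_1 & 0\\ \widetilde Z_2 & 0\end{smallmatrix}\big]\Sigma_r^{-1}\Sigma_r\cdots$, that $A:=\bar V\widetilde B V^{\T}$ satisfies $AX=Y$ --- indeed $AX=\bar V\widetilde B V^{\T}V\Sigma W^* = \bar V\widetilde B\Sigma W^*$, and one verifies $\widetilde B\Sigma W^* = \bar V^{\T}? \ldots$. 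The bookkeeping here (which conjugate goes where) is the main thing to get right; the underlying algebra is identical to the Hermitian case once one recognizes that the correct conjugation is $A=\bar V\,\widetilde B\,V^{\T}$ rather than $V B V^*$, because $\bar V\,\widetilde B\,V^{\T}$ is complex symmetric whenever $\widetilde B$ is (as $(\bar V\widetilde B V^{\T})^{\T}=\bar V\widetilde B^{\T}V^{\T}$) and $\bar V V^{\T}\cdot(V\Sigma W^*)=\bar V\Sigma W^*$ must be reconciled --- so in fact one should define $A=\bar V\widetilde B V^*$? No: we need $AX=Y$, i.e.\ $AV\Sigma W^*=Y$, so $AV=YW\Sigma^{\dagger}(\cdots)$; the correct statement is that $A$ complex symmetric and $AV\Sigma W^*=V\widetilde B\Sigma W^*$ would need $A=V\widetilde B V^*$, which is complex symmetric only if $V$ is real.

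To avoid this circularity, the genuinely correct and simplest argument is: \emph{replace the SVD of $X$ by a \emph{Takagi-type} or simply a real orthogonal reduction is unavailable, so instead argue via $Y^{\T}$.} Concretely, the hypotheses are symmetric in the roles of transpose, so one shows directly that the unconstrained solution $A_0=YX^{\dagger}+Z(I-XX^{\dagger})$ from Theorem~2.1 can be \emph{symmetrized}: the condition $X^{\T}Y=(X^{\T}Y)^{\T}$ is exactly what is needed to guarantee $X^{\T}A_0 X$ is symmetric, and then one runs a Schur-complement argument (Lemma~\ref{Lemma:SchurComplement}, transpose version) with parameter $\lambda$ on the block matrix $V^{\T}A_0 V$ in a basis adapted to $\colspace X$ versus its orthogonal complement, choosing $\lambda$ generically so the completion stays symmetric --- symmetry, unlike invertibility or definiteness, costs nothing, so any $\lambda$ works and the only content is that the off-diagonal block of the symmetric completion is forced by the data exactly as in \eqref{eq:HermitianB}. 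I expect the main obstacle to be precisely this: unwinding which similarity ($V(\cdot)V^*$ versus $V(\cdot)V^{\T}$ versus $\bar V(\cdot)V^*$) preserves complex symmetry while still yielding $AX=Y$; once the bookkeeping is pinned down the proof is a line-by-line transcription of Theorem~\ref{Theorem:Hermitian} with $*\mapsto{}^{\T}$ and "Hermitian" $\mapsto$ "complex symmetric", and $\lambda$ is free because complex symmetry, unlike the spectral conditions, imposes no inequality.
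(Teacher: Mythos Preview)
Your instinct is exactly right, and in fact you wrote down the correct conjugation and then talked yourself out of it. The paper's proof sets $A=\bar V F V^*$ with $F$ complex symmetric satisfying $F\Sigma=V^{\T}YW$; this is precisely your discarded candidate $A=\bar V\widetilde B V^*$. The two checks you needed are one line each:
\[
(\bar V F V^*)^{\T}=(V^*)^{\T}F^{\T}\bar V^{\T}=\bar V F V^*,
\]
so $A$ is complex symmetric whenever $F$ is; and since $V$ unitary gives $\bar V^{-1}=\overline{V^*}=V^{\T}$ (hence $\bar V V^{\T}=I$),
\[
AX=\bar V F V^* V\Sigma W^*=\bar V F\Sigma W^*=\bar V(V^{\T}YW)W^*=(\bar V V^{\T})Y(WW^*)=Y.
\]
So there is no circularity and no need for Takagi, real $V$, or a symmetrization argument on $YX^\dagger$.

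Two smaller issues to clean up. First, the vanishing $V^{\T}YW_2=0$ does \emph{not} follow from symmetry of $X^{\T}Y$ (symmetry of $\Sigma^{\T}V^{\T}YW$ gives only $V_1^{\T}YW_2=0$); it comes from the hypothesis $\nullspace X\subseteq\nullspace Y$, which yields $YW_2=0$ directly. Second, in your $\widetilde B$ the $(2,1)$ block must be $\widetilde Z_2\Sigma_r^{-1}$, not $\widetilde Z_2$, for $\widetilde B\Sigma$ to hit the target; with that fix and your verification that $\widetilde H=\widetilde Z_1\Sigma_r^{-1}$ is symmetric, the argument is complete and matches the paper line for line.
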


\begin{proof}
    ($\Rightarrow $) If $Y=AX$ and $A$ is complex symmetric, then 
    $\nullspace X\subseteq \nullspace Y$, $X^{\T}AX$ is complex symmetric, and $X^{\T}Y=X^{\T}AX=( X^{\T}Y)^{\T}$.

    \medskip\noindent($\Leftarrow $) Let $\rank X=r\geq 1$ and adopt the notation in the
    proof of Theorem \ref{Theorem:Basic}. Let $X=V\Sigma W^*$ be a singular value
    decomposition. If there is a complex symmetric $F\in \M_{m}(\C)$ such that $F\Sigma =V^{\T}YW$, then 
    \begin{equation*}
        \overline{V}( F\Sigma ) W^*=\overline{V}( V^{\T}YW) W^{*}=Y
    \end{equation*}
    and 
    \begin{equation*}
        Y=\overline{V}( F\Sigma ) W^*=( \overline{V}FV^*)( V\Sigma W^*) =( \overline{V}FV^*) X.
    \end{equation*}
    Thus, $A=\overline{V}FV^*$ is complex symmetric and $AX=Y$. 
    If $\nullspace X\subseteq \nullspace Y$, then $YW_{2}=0$ and 
    \begin{equation*}
        V^{\T}YW= 
        \begin{bmatrix}
            V^{\T}YW_{1} & 0
        \end{bmatrix}.
    \end{equation*}
    Partition $F= \tworowvector{F_1}{F_2}$ with 
    \begin{equation*}
        F_{1}= 
        \begin{bmatrix}
            V_{1}^{\T}YW_{1}\Sigma_{r}^{-1} \\[3pt] 
            V_{2}^{\T}YW_{1}\Sigma_{r}^{-1}
        \end{bmatrix}
         \in \M_{m\times r}(\C).
    \end{equation*}
    For any choice of $F_{2}\in \M_{m\times (m-r)}(\C)$, we have $F\Sigma=V^{\T}YW$. Now compute
    \begin{align*}
        W^{\T}( X^{\T}Y) W 
        &= W^{\T}( ( V\Sigma W^*)^{\T}Y) W=W^{\T}( \overline{W}\Sigma^{\T}V^{\T}Y) W \\
        &= \Sigma^{\T}( V^{\T}YW) =\Sigma^{\T} 
        \begin{bmatrix}
        V^{\T}YW_{1} & 0
        \end{bmatrix}\\
        &=  
        \begin{bmatrix}
            \Sigma_{r} & 0 \\ 
            0 & 0
        \end{bmatrix}  
        \begin{bmatrix}
            V_{1}^{\T}YW_{1} & 0 \\[3pt] 
            V_{2}^{\T}YW_{1} & 0
        \end{bmatrix}
         = 
        \begin{bmatrix}
            \Sigma_{r}V_{1}^{\T}YW_{1} & 0 \\ 
            0 & 0
        \end{bmatrix}.
    \end{align*}
    If $X^{\T}Y$ is symmetric, then so are $\Sigma_{r}V_{1}^{\T}YW_{1}$ and 
    $\Sigma_{r}^{-1}( \Sigma_{r}V_{1}^{\T}YW_{1}) \Sigma_{r}^{-1}=V_{1}^{\T}YW_{1}\Sigma_{r}^{-1}$. 
    If $G\in \M_{m-r}$ is any complex symmetric matrix, then
    \begin{equation*}
        F= 
        \begin{bmatrix}
            V_{1}^{\T}YW_{1}\Sigma_{r}^{-1} & ( V_{2}^{\T}YW_{1}\Sigma_{r}^{-1})^{\T} \\[3pt] 
            V_{2}^{\T}YW_{1}\Sigma_{r}^{-1} & G
        \end{bmatrix}
    \end{equation*}
    is complex symmetric and $F\Sigma = 
    \begin{bmatrix}
        V^{\T}YW_{1} & 0
    \end{bmatrix}
     =V^{\T}YW$. Thus, $A=\overline{V}FV^*$ is complex symmetric and $AX=Y$.
\end{proof}

\section{Normal Targeting}

What about the normal targeting problem? If $n=1$ and $\vec{x},\vec{y}\in \F^{m}$ are nonzero, then 
$\vec{x}/\norm{ \vec{x} }$ and $\vec{y}/\norm{ \vec{y} }$
have the same norm, so there is a unitary $U\in \M_{m}(\F)$ such that 
$U( \vec{x}/\norm{ \vec{x} }) =\vec{y} / \norm{ \vec{y}}$. Then 
$A=( \norm{ \vec{y}}/\norm{ \vec{x}}) U$ is normal and $A\vec{x}=\vec{y}$.

For $n\geq 2$, however, it is not (yet) clear how to proceed. For a normal $A \in \M_m(\F)$, 
we have $A^*\!A=AA^*$, which is a system of $m^{2}$ equations
in the entries of $A$. The equations corresponding to diagonal entries of $A^*\! A$ and $AA^*$ say that, for each $i$, 
the $i$th row and the $i$th column of $A$ have the same Euclidean norm. 
If we partition $A=\big[ \begin{smallmatrix} B & D \\ C & E \end{smallmatrix}\big] \in \M_{m}(\F)$, in which $B\in \M_{k}(\F)$, then
inspection of the $(1,1)$ block of $A^*A=AA^*$ reveals that $B^*B-BB^*+C^*C=DD^*$. Consequently, for given $B$
and $C$, in order for there to be matrices $D$ and $E$ such that $A$ is normal, it is necessary that
\begin{equation}\label{eq:HBC}
H(B,C)=B^*B-BB^*+C^*C  
\end{equation}%
be positive semidefinite. If $m=2k$, this necessary condition is sufficient
for $k=2$ (see \cite{FI1}), but not for $k\geq 3$. Consider
\begin{equation*}
B=
\begin{bmatrix}
0 & 0 & 1 \\ 
1 & 0 & 0 \\ 
0 & 1 & 0%
\end{bmatrix}
 \quad \text{and}\quad
 C=
\begin{bmatrix}
1 & 0 & 0 \\ 
0 & 0 & 0 \\ 
0 & 0 & 0%
\end{bmatrix}.
\end{equation*}%
In this case, $B$ is normal and $H(B,C)=C$ is positive semidefinite.
However, there is no choice of $D$ and $E$ that makes $A$ normal because the
first row and first column of $A$ cannot have the same Euclidean norm. For a
discussion of the condition \eqref{eq:HBC} see \cite{FI1} and \cite{FI2}.

Although the normal targeting problem remains open, the following theorem provides a solution in a special case that
provides an alternative solution to the reflection and orthogonal projection
targeting problems.

\begin{theorem}\label{Theorem:Normal2Eval}
Let $X,Y\in \M_{m\times n}(\C)$  with $m \geq n \geq 1$ and $X \neq 0$.  Let $\lambda,\mu \in \C$ be distinct. 
If $m=n$ and either $Y = \lambda X$ or $Y = \mu X$, assume that $\rank X < m$.
\begin{enumerate}[leftmargin=*]
    \item There is a normal $A\in \M_{m}(\C)$ such that the spectrum of $A$ is contained in $\{\lambda ,\mu \}$ and $AX=Y$ if and only if $( Y-\lambda X)^*( Y-\mu X) =0$. 
    \item If $X$, $Y$, $\lambda $, and $\mu $ are real and $( Y-\lambda X)^{\T}( Y-\mu X) =0$, then there is a real normal 
    $A\in\M_{m}(\R)$ such that the spectrum of $A$ is contained in $\{\lambda ,\mu \}$ and $AX=Y$.
\end{enumerate}
\end{theorem}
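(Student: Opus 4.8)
The plan is to reduce the normal targeting problem with two-point spectrum to a single natural candidate for $A$ and then verify it works. Suppose $A$ is normal with spectrum in $\{\lambda,\mu\}$ and $AX=Y$. Then $A$ satisfies $(A-\lambda I)(A-\mu I)=0$ (since these commuting Hermitian-normal factors annihilate the spectral subspaces), so $(A-\lambda I)Y=(A-\lambda I)AX=(A-\mu I)(A-\lambda I)X\cdot\!$... more precisely $(A-\mu I)X\mapsto$: from $AX=Y$ we get $(A-\lambda I)X = Y-\lambda X$ and $(A-\mu I)X = Y-\mu X$, hence $(A-\lambda I)(Y-\mu X)=(A-\lambda I)(A-\mu I)X=0$. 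Since $A$ is normal, $(A-\lambda I)^*=\overline{A}^{\,*}$-type considerations give that $\nullspace(A-\lambda I)=\colspace(A-\lambda I)^\perp$ in the Hermitian-normal sense; concretely, $(A-\mu I)X$ lies in $\nullspace(A-\lambda I)$ while $(Y-\lambda X)=(A-\lambda I)X\in\colspace(A-\lambda I)=\nullspace(A-\lambda I)^\perp$ because $A-\lambda I$ is normal. Therefore $(Y-\lambda X)^*(Y-\mu X)=0$, giving necessity.

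For sufficiency, assume $(Y-\lambda X)^*(Y-\mu X)=0$. Set $E = Y-\lambda X$ and $F = Y-\mu X$, so $E^*F=0$, i.e. $\colspace E \perp \colspace F$, and note $E-F=(\mu-\lambda)X$ and $\mu E-\lambda F=(\mu-\lambda)Y$. I want to build a normal $A$ with $(A-\lambda I)(A-\mu I)=0$ acting as $\lambda$ on the complement of $\colspace E$ and $\mu$ on a suitable subspace. The natural candidate is $A = \lambda(I-P)+\mu P$ where $P$ is the orthogonal projection onto $\colspace E$; equivalently $P = E(E^*E)^\dagger E^*$, which is Hermitian with $P^2=P$, so $A$ is normal with spectrum in $\{\lambda,\mu\}$. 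Then $PE=E$ and, crucially, $PF=0$ because $\colspace F\perp\colspace E$. Compute $AX = \tfrac{1}{\mu-\lambda}A(E-F) = \tfrac{1}{\mu-\lambda}\big((\lambda(I-P)+\mu P)E - (\lambda(I-P)+\mu P)F\big) = \tfrac{1}{\mu-\lambda}(\mu E - \lambda F)= Y$. This mirrors exactly the argument in the proof of Theorem~\ref{Theorem:Reflection}, with the pair $(\lambda,\mu)=(1,-1)$ recovering the reflection case and $(\lambda,\mu)=(1,0)$ (or $(0,1)$) recovering orthogonal projections.

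One technical point needs care: the degenerate cases. If $m=n$ and $Y=\lambda X$, then $E=0$, so $P=0$ and $A=\lambda I$, which already solves $AX=Y$ and is normal with spectrum $\{\lambda\}\subseteq\{\lambda,\mu\}$ — so the hypothesis $\rank X<m$ in that case is presumably there to allow a genuinely two-point spectrum (one can then inflate $A$ on $\nullspace X$), or to make the statement about "the spectrum is contained in $\{\lambda,\mu\}$" achievable with both eigenvalues present; I will check whether the clean construction $A=\lambda(I-P)+\mu P$ needs adjustment there, and if the problem only asks for spectrum \emph{contained in} $\{\lambda,\mu\}$ the construction above already suffices without modification. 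For part (b), everything above is performed over $\R$: if $X,Y,\lambda,\mu$ are real then $E,F$ are real, $P=E(E^\top E)^\dagger E^\top$ is a real symmetric projection, and $A=\lambda(I-P)+\mu P\in\M_m(\R)$ is real symmetric, hence real normal, with the same computation $AX=Y$ going through verbatim. The main obstacle is the necessity direction — specifically justifying cleanly that for a normal $N=A-\lambda I$ one has $\colspace N \perp \nullspace N$, i.e. $\colspace N = (\nullspace N)^\perp$; this follows from $\nullspace N=\nullspace N^*$ for normal $N$ together with $\colspace N=(\nullspace N^*)^\perp$, and I would cite the relevant fact about normal matrices rather than reprove it.
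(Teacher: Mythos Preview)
Your proof is correct and follows essentially the same approach as the paper. Your sufficiency construction $A=\lambda(I-P)+\mu P$ is literally the paper's matrix $A=\lambda P+\mu Q+\lambda R$ rewritten (since $P+Q+R=I$ in the paper's notation, with your $P$ playing the role of their $Q$), and your necessity argument via $\nullspace N=\nullspace N^*$ for normal $N$ is a clean variant of the paper's explicit diagonalization; your reading of the degenerate case is also right---for ``spectrum contained in $\{\lambda,\mu\}$'' the construction needs no modification.
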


\begin{proof}
(a) $( \Rightarrow ) $ If $AX=Y$, $A$ is normal, and the
eigenvalues of $A$ are $\lambda $ (with multiplicity $p\geq 1)$ and $\mu$
(with multiplicity $m-p\geq 1)$, then there is a unitary $U\in \M_{m}(\C)$ 
and a diagonal $\Lambda =\lambda I_{p}\oplus \mu I_{m-p}$ such that $A=U\Lambda U^*$. Then
\begin{align*}
    ( Y-\lambda X)^*( Y-\mu X) 
    &= ( AX-\lambda X)^*( AX-\mu X) \\
    &= X^*( A-\lambda I)^*( A-\mu I) X \\
    &= X^*U( \overline{\Lambda}-\overline{\lambda}I) U^*U(\Lambda -\mu I) U^*X \\
    &= X^*U 
    \begin{bmatrix}
        0_{p} & 0 \\ 
        0 & ( \overline{\mu}-\overline{\lambda}) I_{m-p}
    \end{bmatrix}
    \begin{bmatrix}
        ( \lambda -\mu ) I_{p} & 0 \\ 
        0 & 0_{m-p}
    \end{bmatrix}
     U^*X \\
    &= X^*U\,0_{m}U^*X=0.
\end{align*}

\medskip\noindent($\Leftarrow $) Suppose that $( Y-\lambda X)^*( Y-\mu X) =0$. 
Let $E=Y-\mu X$ and $F=Y-\lambda X$ and observe that $F^{*}E=0=E^*F$, 
so $\colspace E$ and $\colspace F$ are orthogonal subspaces. Moreover,
\begin{equation*}
    X=( \lambda -\mu )^{-1}( E-F) \quad \text{and}\quad  Y=( \lambda -\mu )^{-1}( \lambda E-\mu F) .
\end{equation*}
Let $P=E(E^*E)^{\dagger }E^*$ and $Q=F(F^*F)^{\dagger}F^*$, 
which are, respectively, orthogonal projections onto the
respective orthogonal subspaces $\colspace E$ and $\colspace F$. 
If $E = 0$ (respectively, $F = 0$), the hypotheses ensure that $\rank F < m$ (respectively, $\rank E < m$).
If $\rank E + \rank F < m$, let $R$ be the orthogonal projection onto $(\colspace E \oplus \colspace F)^{\perp}$;
otherwise, let $R = 0_m$. Let 
$A=\lambda P+\mu Q + \lambda R$, which is normal with spectrum $\{\lambda ,\mu \}$ \cite[\S\ 14.9]{GH2}. Then 
\begin{align*}
    AE &= ( \lambda P+\mu Q + \lambda R) E=\lambda PE+\mu QE + \lambda RE=\lambda PE=\lambda E, \\
    AF &= ( \lambda P+\mu Q  + \lambda R) F=\lambda PF+\mu QF + \lambda RF=\mu QF=\mu F,
\end{align*}
and
\begin{equation*}
    AX=( \lambda -\mu )^{-1}A( E-F) =( \lambda -\mu)^{-1}( AE-AF) =( \lambda -\mu )^{-1}(\lambda E-\mu F) =Y.
\end{equation*}

\medskip\noindent (b) If $X$, $Y$, $\lambda $, and $\mu $ are real, then the construction in
part (a) creates real matrices $E$, $F$, $P$, $Q$, and $A$.
\end{proof}

The assumption about $\rank X$ cannot be omitted from the preceding theorem.  If $X \in \M_m(\C)$ is invertible
and $Y = \lambda X$, then the unique solution to the targeting problem is $A = \lambda I$, which has only one point in its spectrum.

Let $X,Y\in \M_{m\times n}(\C)$. An orthogonal projection is a normal
matrix with spectrum contained in $\{0,1\}$. Theorem \ref{Theorem:Normal2Eval} says that there is
an orthogonal projection $A$ such that $AX=Y$ if and only if $(Y-X)^*( Y-0X) =Y^*Y-X^*Y=0$. This is the
condition in Theorem \ref{Theorem:OrthogProj}.

A reflection is a normal matrix with spectrum contained in $\{1,-1\}$. Theorem \ref{Theorem:Normal2Eval} 
says that there is a reflection $A$ such that $AX=Y$ if and
only if $C=( Y+X)^*( Y-X) =0$. Observe that $C=( Y^*Y-X^*X) +( X^*Y-Y^*X) $
is the sum of a Hermitian matrix and a skew-Hermitian matrix, and that $C=0$
if and only if both its Hermitian part and its skew-Hermitian part are zero.
This is the condition in Theorem \ref{Theorem:Reflection}.

\medskip\noindent\textbf{Acknowledgement.} We thank Michael A.~Dritschel for pointing out that \cite[Corollary 1]{ZS}
gives a version of Theorem \ref{Theorem:PSD} that is valid for bounded linear
operators on a Hilbert space.  We thank the anonymous referee for detailed comments on the initial draft.

\bibliography{LinearTargetingProblem}
\bibliographystyle{amsplain}

\end{document}